\newtheorem{theorem}{Theorem}
\newtheorem{lemma}[theorem]{Lemma}
\newtheorem{corollary}[theorem]{Corollary}
\newtheorem{proposition}[theorem]{Proposition}
\newtheorem{observation}[theorem]{Observation}
\newtheorem{conjecture}{Conjecture}
\theoremstyle{definition}                    
\newtheorem{definition}[theorem]{Definition}
\theoremstyle{remark}
\title{Intersection sizes of linear subspaces with the hypercube}
\author{Carla Groenland and Tom Johnston\thanks{Mathematical Institute, University of Oxford, Oxford, OX2 6GG, United Kingdom.\newline  E-mail: \texttt{\{groenland,johnston\}@maths.ox.ac.uk}.}}
\date{\today}
\renewcommand{\H}{\mathbb{H}}
\newcommand{\N}{\mathbb{N}}
\newcommand{\R}{\mathbb{R}}
\tikzstyle{vertex} = [fill,shape=circle, minimum size = 0.5cm, inner sep = 0, node distance = 1cm]
\begin{document}
\maketitle
\begin{abstract}
We continue the study by Melo and Winter [\emph{arXiv:1712.01763}, 2017] on the possible intersection sizes of a $k$-dimensional subspace with the vertices of the $n$-dimensional hypercube in Euclidean space. Melo and Winter conjectured that all intersection sizes larger than $2^{k-1}$ (the ``large'' sizes) are of the form $2^{k-1}+2^i$. We show that this is almost true: the large intersection sizes are either of this form or of the form $35\cdot 2^{k-6}$. We also disprove a second conjecture of Melo and Winter by proving that a positive fraction of the ``small'' values is missing.
\end{abstract}
\section{Introduction}
What possible intersection sizes can the hypercube $\H^n=\{0,1\}^n$ have with a $k$-dimensional linear subspace $S$ of $\R^n$? The smallest possible intersection size is 1 and the largest $2^k$, but which numbers in between are possible? Melo and Winter \cite{MeloWinter} initiated the study of the sets $H(n,k)$ of possible intersection sizes.
Melo and Winter made two conjectures about the structure of $H(\infty,k)=\bigcup_{n\geq k}H(n,k)$, depending on whether the intersection size is ``large'' (greater than $2^{k-1}$) or ``small'' (smaller than $2^{k-1}$). Let $H^+(n,k)$ be the large values of $H(n,k)$ i.e. \[H^+(n,k)  = H(n,k) \setminus \{1,\dots,2^{k-1}\}\] and set $H^+(\infty,k)= H(\infty, k ) \setminus \{1, \dots, 2^{k-1}\}$. Similarly, we denote the small values by $H^-(n,k)=H(n,k)\cap \left[2^{k-1}\right]$ and $H^-(\infty,k)=H(\infty,k)\cap \left[2^{k-1}\right]$ (where we use the standard notation $[n] = \{1,\dots, n\}$). Using this notation, Melo and Winter's conjectures are as follows.
\begin{conjecture}[Conjecture 3.2 in \cite{MeloWinter}]
\label{conj:large}
$H^+(\infty,k)=\{2^{k-1}+2^i:i\in \{0,1,\dots,k-1\}\}$.
\end{conjecture}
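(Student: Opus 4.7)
The plan is to prove Conjecture~A modulo the exception $35 \cdot 2^{k-6}$ that the abstract indicates must also be included in the conjectured set. I address the two inclusions separately.

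\emph{Achievability} ($\supseteq$). For each $i \in \{0,\ldots,k-1\}$ I would exhibit a $k$-dimensional subspace $S$ with $|S \cap \H^n| = 2^{k-1} + 2^i$. The natural construction is to let $S$ contain $\mathrm{span}(e_1,\ldots,e_{k-1})$, which already contributes the $2^{k-1}$ points of the standard $(k-1)$-subcube, and then adjoin one further generator $w$ chosen so that exactly $2^i$ additional hypercube points arise from combinations involving $w$ non-trivially. A concrete choice is to support $w$ on a disjoint block of coordinates mimicking a small $\F_2$-linear subspace, together with a suitable $0/{-1}$ correction on $e_1,\ldots,e_{k-1}$.

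\emph{Upper bound} ($\subseteq$, the harder direction). I would argue by induction on $k$; the case $k=1$ is trivial. Let $S$ be $k$-dimensional with $T = S \cap \H^n$ and $|T| > 2^{k-1}$; the general bound $|T| \leq 2^{\dim \mathrm{span}(T)}$ forces $\mathrm{span}(T) = S$, so we may work inside $S$. Pick any coordinate $i$ on which $\pi_i|_S$ is non-zero and split $T = T_0 \sqcup T_1$ by $v_i \in \{0,1\}$. Then $T_0$ is the hypercube-intersection of the $(k-1)$-dimensional subspace $\ker(\pi_i|_S)$, so $|T_0| \in H(\infty,k-1)$. For $T_1$, fix any $w \in T_1$; the map $v \mapsto w+v$ bijects $T_1$ with $\{v \in \ker(\pi_i|_S) : w+v \in \H^n\}$, and negating the coordinates $j$ where $w_j=1$ transforms this into the hypercube-intersection of another $(k-1)$-dimensional subspace, whence $|T_1| \in H(\infty,k-1)$ too. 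Since $|T_0|+|T_1| > 2^{k-1}$, at least one of them exceeds $2^{k-2}$ and, by induction, takes the form $2^{k-2}+2^j$ or the exception $35 \cdot 2^{k-7}$, while the other is at most $2^{k-1}$.

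\emph{Main obstacle.} A single coordinate-split does not on its own pin $|T_0|+|T_1|$ down to the conjectured form: even with each summand tightly constrained, the sum can take many values. I expect the crux of the proof to require a coordinated analysis across all valid splits (or a clever canonical choice of $i$), together with the constraints on which small values $H(\infty,k-1)$ actually attains — the abstract indicates these are restrictive. Finally, the exception $35 \cdot 2^{k-6}$ signals that the base of the induction must locate an anomalous 6-dimensional configuration of size $35$ explicitly, after which a direct-sum / scaling lemma should lift it to the full family for all $k \geq 6$ while ruling out every other putative exception.
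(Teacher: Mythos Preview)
You correctly recognise that Conjecture~A is false as stated --- the value $35\cdot 2^{k-6}$ must be added for $k\geq 6$ --- so you are really aiming at the paper's Theorem~\ref{thm:largeinf}. Your achievability direction is along the right lines (and is in any case already in Melo--Winter), and your inductive skeleton for the upper bound --- split on a coordinate and descend to $k-1$ --- matches the paper's. You also diagnose the obstacle accurately, and your parenthetical guess ``a clever canonical choice of $i$'' is in fact the right direction. What is missing is any mechanism for making that choice, and your other suggestion (leaning on which small values $H(\infty,k-1)$ misses) is not used and would not suffice: for an arbitrary coordinate the two pieces $|T_0|,|T_1|$ are far too loosely coupled for their sum to be pinned down that way.

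The paper's decisive step is a lemma guaranteeing that (except in the boundary case $t=2^{k-1}+1$, handled separately) there always exists a coordinate $i$ with $|T_0|=|T_1|=\tfrac{1}{2}|T|$ exactly; then $\tfrac{1}{2}|T|\in H^+(\infty,k-1)$ and the induction closes in one line. Proving this ``halving coordinate'' lemma is the real content. Working in the linear-map formulation $L:\R^k\to\R^m$, the paper first uses $t(L)>2^{k-1}$ to force all $L(e_i)\in\{-1,0,1\}^m$, and then shows --- via a structural argument backed by a computer search --- that the ``shape'' of a minimal such $L$ (the hypergraph whose edges are the supports of the component maps $L_j$) must be a $(2,1)$-star or a $(3,2)$-star. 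From either of these rigid shapes one reads off a free coordinate, giving the halving. The anomalous $35\cdot 2^{k-6}$ already appears in the codimension-one analysis (from $\binom{7}{4}=35$) and is simply doubled up by the induction thereafter; the base cases $k\leq 8$ are verified by computer. This shape machinery is the missing ingredient in your proposal.
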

\begin{conjecture}[Conjecture 3.3 in \cite{MeloWinter}]
\label{conj:small}
$H^-(\infty,k)=[2^{k-1}]$.
\end{conjecture}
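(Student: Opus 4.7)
The goal is to show every integer $m\in[2^{k-1}]$ is realised as $|S\cap\H^n|$ for some $n$ and some $k$-dimensional subspace $S\subseteq\R^n$. The plan is to construct such subspaces explicitly by induction on $k$. The base cases $k=1$ (only $m=1$) and $k=2$ ($m\in\{1,2\}$) are straightforward, with witnesses given by generic low-dimensional subspaces (a generic line for $m=1$, the span of $e_1$ padded with a generic direction for $m=2$).

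For the inductive step, assume every $m'\in[2^{k-2}]$ is realised by some $(k-1)$-dimensional $S'\subseteq\R^{n'}$ with $V':=S'\cap\H^{n'}$ satisfying $|V'|=m'$. Given a target $m\in[2^{k-1}]$, I would split on parity. If $m=2m'$ is even, lift $S'$ to the $k$-dimensional subspace $S'\times\R\subseteq\R^{n'+1}$: each point of $V'$ lifts to two hypercube points (with last coordinate $0$ or $1$), doubling the intersection to $m$. If $m$ is odd, extend instead as $S=\mathrm{span}(S'\times\{0\},\,w)$, where $w=(\tilde w,1)\in\R^{n'+1}$ is chosen so that the affine translate $S'+\tilde w\subseteq\R^{n'}$ picks up exactly $m-m'$ additional hypercube points in $\{0,1\}^{n'}$. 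The total intersection is then $m'+(m-m')=m$.

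The principal obstacle is the odd case: for each odd $m$ one must exhibit a base subspace $S'$ together with a shift $\tilde w$ whose affine translate hits the hypercube in the precise residual count, without introducing spurious further intersections. The hypercube points form a rigid arithmetic structure and it is far from obvious that every residual count is attainable in a controlled way. A natural fallback would be to start from a known large intersection (the axis-aligned subspace achieving $2^{k-1}$) and peel off individual hypercube vertices by small generic perturbations that kill one intersection at a time; but ensuring one can land on \emph{every} value in $[2^{k-1}]$ without skipping any is where the argument would need to do real work.

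Given the abstract's announcement that a positive fraction of small values is actually missing, I suspect this obstacle is genuine rather than merely technical: certain sizes $m$ would correspond to there being no set $V\subseteq\{0,1\}^n$ with $|V|=m$, $\dim\mathrm{span}(V)\leq k$, and $\mathrm{span}(V)\cap\{0,1\}^n=V$. Isolating which $m$ are obstructed — rather than constructing witnesses for the others — is presumably the core of the authors' contribution in this direction, and requires a structural/obstruction argument in place of the inductive construction above.
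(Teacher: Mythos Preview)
The statement you are addressing is a conjecture that the paper \emph{disproves}, not one it proves. So any attempt to establish $H^-(\infty,k)=[2^{k-1}]$ is doomed from the outset, and you correctly sense this by the end of your proposal: the obstacle in the odd case of your induction is not a technicality but a genuine obstruction. Concretely, your ``peel off one vertex at a time'' fallback cannot work either, because the paper shows (Theorem~\ref{thm:smalln}) that for $k\ge 8$ the entire interval $\{\tfrac{15}{16}2^{k-1}+1,\dots,2^{k-1}-1\}$ is missing from $H(\infty,k)$ except for the single value $\tfrac{126}{128}2^{k-1}$. So there is no way to step down one at a time from $2^{k-1}$.

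What the paper actually does is the obstruction argument you anticipate in your final paragraph. The key steps are: (i) a Littlewood--Offord style bound (Lemma~\ref{lem:antichain}) combined with Lemma~\ref{lem:small} forces any linear map $L$ with $t(L)\in(\tfrac{15}{16}2^{k-1},2^{k-1})$ to send every basis vector into $\{-1,0,1\}^m$ (Lemma~\ref{lem:ints}); (ii) once the entries are restricted to $\{-1,0,1\}$, the same shape-enumeration machinery used for the large case applies, and a computer search over the finitely many admissible support hypergraphs shows that only $\tfrac{15}{16}2^{k-1}$, $\tfrac{126}{128}2^{k-1}$, and $2^{k-1}$ survive in that range. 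Your inductive construction is thus not merely incomplete but aimed at a false target; the paper's contribution is precisely the structural obstruction you correctly guessed would be needed.
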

In Section \ref{sec:large} we determine the exact structure of $H^+(n,k)$ with the following theorem and find that there is one additional value in $H^+(n,k)$ for $k \geq 6$. 
\begin{theorem}
\label{thm:largeinf} For $k \geq 6$,
\[H^+(\infty,k)=\{2^{k-1}+2^i:i\in \{0,\dots,k-1\}\}\cup\{2^{k-1}+2^{k-5}+2^{k-6}\}\]
\end{theorem}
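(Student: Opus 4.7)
The plan is to verify both inclusions in the claimed identity. For ``$\supseteq$'' I will exhibit explicit $k$-dimensional subspaces realizing each listed value; for ``$\subseteq$'' I will argue by induction on $k$, with the base case $k=6$ handled by direct case analysis, that no other value greater than $2^{k-1}$ can be realized.

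For the constructions, the standard values $2^{k-1}+2^i$ ($0\le i\le k-1$) can be realized using a ``near-coordinate'' construction: start with a $(k-1)$-dimensional subspace containing exactly $2^{k-1}$ hypercube points (for example, a coordinate subspace) and adjoin a carefully chosen extra direction whose admissible $0/1$ combinations contribute exactly $2^i$ further points; equivalently, writing $2^{k-1}+2^i = 2^i(2^{k-i-1}+1)$, one may direct-sum an $(k-i)$-dim subspace realising $2^{k-i-1}+1$ with $i$ coordinate dimensions. For the exceptional value $2^{k-1}+2^{k-5}+2^{k-6}=35\cdot 2^{k-6}$ it suffices to construct a single $6$-dimensional subspace $S_6\subseteq\R^n$ with $|S_6\cap\H^n|=35$, and then take $S = S_6 \oplus \langle e_{n+1},\dots,e_{n+k-6}\rangle$, which is $k$-dimensional and satisfies $|S\cap\H^{n+k-6}| = 35\cdot 2^{k-6}$.

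For the exclusion direction, let $S$ be a $k$-dimensional subspace with $t=|S\cap\H^n|>2^{k-1}$. After discarding coordinates on which $S$ vanishes, I may assume $S$ is not contained in any coordinate hyperplane. For each coordinate $j$, split $S\cap\H^n$ according to the value of $x_j$ into $a_j$ points with $x_j=0$ (lying in the $(k-1)$-dim linear subspace $S_j = S\cap\{x_j=0\}$) and $b_j = t-a_j$ points with $x_j=1$ (lying in an affine translate of $S_j$). Choosing $j$ so that $\max(a_j,b_j)\ge t/2 > 2^{k-2}$, the inductive hypothesis forces the larger of $a_j,b_j$ into one of the values listed for dimension $k-1$. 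Fixing a witness $p$ in the $x_j=1$ part, the map $v\mapsto v-p$ identifies the $b_j$ points with $\{-1,0,1\}$-vectors in $S_j$ whose sign pattern is dictated by the support of $p$; combining this ``restricted difference set'' structure with the restricted possibilities for $a_j$ pins $t=a_j+b_j$ down to the claimed list.

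The principal obstacle is the base case $k=6$: one must verify that any $6$-dimensional subspace with more than $32$ hypercube points attains exactly one of $\{33,34,35,36,40,48,64\}$, which requires a detailed (and likely computer-assisted) case analysis to also identify and characterise the unique configuration giving the exceptional value $35$. A subtler point in the inductive step is handling the case where $a_j$ itself equals the exceptional value $2^{k-2}+2^{k-6}+2^{k-7}$ for dimension $k-1$: one must show that the only compatible choices of $b_j$ make $t$ land in the target set, ensuring that the exception does not propagate into genuinely new totals at dimension $k$.
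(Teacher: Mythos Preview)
Your construction direction is fine and essentially matches the paper's (the values $2^{k-1}+2^i$ via Melo--Winter's Proposition~\ref{prop:MeloWinter}, and $35\cdot 2^{k-6}$ from the single-constraint map with three $+1$'s and three $-1$'s in dimension $6$, tensored with $k-6$ free coordinates). The exclusion direction, however, has a real gap in the inductive step. Knowing only that $\max(a_j,b_j)>2^{k-2}$ pins the larger half to the short list for dimension $k-1$, but the smaller half may lie anywhere in $H^-(\infty,k-1)$, which is a rich set; your ``restricted difference set'' remark---that after translating by $p$ the $b_j$ points become $\{-1,0,1\}$-vectors in $S_j$ with a prescribed sign pattern---gives no quantitative control over $b_j$ and does not rule out, say, $a_j=2^{k-2}+2^{k-3}$ paired with some generic small $b_j$ producing a sum $t$ off the list. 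Nor does the freedom in choosing $j$ help here, since you have not identified any particular $j$ with extra structure.

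The paper's proof turns on exactly this missing ingredient. Its key lemma (Lemma~\ref{lem:double}) shows that for $k\ge 8$ and $t(L)>2^{k-1}$, either $t(L)=2^{k-1}+1$, or there is a coordinate $i$ with $a_i=b_i=t(L)/2$ \emph{exactly}; then $t/2\in H^+(\infty,k-1)$, and doubling the list for $k-1$ yields the list for $k$ minus the single value $2^{k-1}+1$, which is handled by the first alternative. Establishing this halving lemma requires first the observation that $L(e_i)\in\{-1,0,1\}^m$ whenever $t(L)>2^{k-1}$, and then a classification (Lemma~\ref{lem:structure}, obtained by computer search over small configurations) of the possible ``shapes'' of minimal maps as $(2,1)$-stars or $(3,2)$-stars, both of which visibly satisfy the halving property. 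Correspondingly the base cases are $k=6,7,8$ (not just $k=6$), also verified by computer.
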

In fact, by proving a precise structural result (Lemma \ref{lem:structure}) and combining this with a computer search, we determine $H^+(n,k)$ for every $n$ (Theorem \ref{thm:largen}) from which the theorem above follows.  In particular, we are also able to answer another question raised by Melo and Winter \cite{MeloWinter} when restricted to the large intersections, namely the smallest $n$ such that $t\in H^+(n,k)$. 

In Section \ref{sec:small} we disprove Conjecture \ref{conj:small} with the following result.

\begin{theorem}
\label{thm:smalln}
For $k \geq 8$ and any $m \geq 1$,
\[ H(k+m, k) \cap \left\{ \tfrac{15}{16} 2^{k-1}, \dots, 2^{k-1}\right\}= \left\{ \tfrac{15}{16} 2^{k-1}, \tfrac{126}{128} 2^{k-1}, 2^{k-1}  \right\}. \]
\end{theorem}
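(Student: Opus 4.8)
The plan is to prove the two inclusions separately. \emph{Attainment:} for each target value I would write down a $k$-dimensional subspace $S\le\R^{k+1}$ realising it as $|S\cap\H^{k+1}|$; appending $m-1$ coordinates on which $S$ vanishes then places the value in $H(k+m,k)$ for every $m\ge1$, since $H(n,k)\subseteq H(n+1,k)$. The subspace $\{x\in\R^{k+1}:x_1+x_2+x_3=x_4\}$ has $4$ admissible choices of $(x_1,x_2,x_3,x_4)\in\H^4$ and $k-3$ free coordinates, so its intersection is $4\cdot2^{k-3}=2^{k-1}$. The subspace $\{x:x_1+x_2=x_3+x_4+x_5+x_6\}$ has, by Vandermonde, $\sum_j\binom2j\binom4j=\binom62=15$ admissible choices of $(x_1,\dots,x_6)$ and $k-5$ free coordinates, giving $15\cdot2^{k-5}=\tfrac{15}{16}2^{k-1}$. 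The subspace $\{x:x_1+x_2+x_3+x_4=x_5+\dots+x_9\}$ has $\sum_j\binom4j\binom5j=\binom94=126$ admissible choices and $k-8$ free coordinates, giving $126\cdot2^{k-8}=\tfrac{126}{128}2^{k-1}$. All three fit in $\R^{k+1}$ because $k\ge8$.

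For the reverse inclusion I would induct on $k$, with small $k$ handled by computer. Let $\dim S=k$ and $t:=|S\cap\H^n|$ lie in the window $[\tfrac{15}{16}2^{k-1},2^{k-1}]$. If $S$ has a free coordinate ($e_i\in S$) then $S=S'\oplus\langle e_i\rangle$ with $\dim S'=k-1$ and $|S'\cap\H^{n-1}|=t/2$, and the inductive hypothesis for $S'$ determines $t$; so assume $S$ has no free coordinate. I then claim that either $t=2^{k-1}$ or $T:=S\cap\H^n$ spans $S$. Indeed, if $T$ fails to span, then $T=\operatorname{span}(T)\cap\H^n$ with $k':=\dim\operatorname{span}(T)<k$, so $t\in H(\infty,k')$; since $t\ge\tfrac{15}{16}2^{k-1}$ gives $t\ge\tfrac{15}{8}2^{k'-1}>2^{k'-1}$ and $k'\ge7$ (as $2^{k'}\ge t\ge\tfrac{15}{16}\cdot2^7$), Theorem \ref{thm:largeinf} applies and $t\in H^+(\infty,k')$; but every member of $H^+(\infty,k')$ other than $2^{k'}$ is at most $\tfrac32\cdot2^{k'-1}$, which is $<\tfrac{15}{16}2^{k-1}\le t$ (as $k'\le k-1$), so $t=2^{k'}$, and as $2^{k-1}$ is the only power of $2$ in the window we conclude $t=2^{k-1}$.

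This leaves the \emph{core case}: $T$ spans $S$ and $t<2^{k-1}$. Then $S=\operatorname{rowspace}(B)$ for a full-row-rank $B\in\{0,1\}^{k\times n}$. Deleting all-zero columns of $B$ and identifying equal columns (which agree on all of $S$) leaves $k$ and $t$ unchanged while strictly decreasing $n$, and any free coordinate created along the way can be removed by the inductive step; iterating, I may assume $B$ has distinct nonzero columns, so $n\le2^k-1$. For such reduced $0/1$-spanned subspaces I would prove a structural description in the spirit of Lemma \ref{lem:structure}: the key point is that $|S\cap\H^n|$ being within a factor $\tfrac{15}{16}$ of $2^{k-1}$ forces the part of $S$ not accounted for by free and twin coordinates onto a bounded set of coordinates, so that $k$ (and hence $n$) is at most an absolute constant, or else $S$ splits along a coordinate partition into pieces handled by induction. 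Once $k$ is bounded, a finite computer search over the finitely many reduced subspaces with $n\le2^k-1$ whose intersection lies in the window pins $t$ down to $\tfrac{15}{16}2^{k-1}$ or $\tfrac{126}{128}2^{k-1}$, and also supplies the base cases of the induction.

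The main obstacle is precisely this core structural step: one needs a quantitatively sharp structure theorem valid just below the threshold $2^{k-1}$, uniform in $k$, strong enough to rule out every reduced $0/1$-spanned subspace whose hypercube intersection lies strictly between $\tfrac{15}{16}2^{k-1}$ and $2^{k-1}$ except those attaining $\tfrac{126}{128}2^{k-1}$. This is where the unexpected gap — the absence of $\tfrac{31}{32}2^{k-1}$, and of everything else in $(\tfrac{15}{16}2^{k-1},2^{k-1})\setminus\{\tfrac{126}{128}2^{k-1}\}$ — has to be extracted.
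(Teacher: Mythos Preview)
Your attainment argument is correct and slightly more explicit than the paper's (which simply cites Lemma~\ref{lem:largecodim1}). Your reductions --- peeling off a free coordinate to induct on $k$, and invoking Theorem~\ref{thm:largeinf} when $T$ fails to span $S$ --- are both valid and rather elegant; the paper does the first (in the form ``reduce to $|S(L)|=k$'') but not the second.

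The genuine gap is exactly where you flag it: the core case. Your hoped-for dichotomy (``$k$ is bounded by an absolute constant, or $S$ splits along a coordinate partition'') is not what actually happens, and the paper's proof does \emph{not} bound $k$. What the paper does instead is prove Lemma~\ref{lem:ints}: if $t(L)\in(\tfrac{15}{16}2^{k-1},2^{k-1})$ then $L(e_i)\in\{-1,0,1\}^m$ for every $i$. This is the decisive step, and it requires a Littlewood--Offord-type antichain bound (Lemma~\ref{lem:antichain}) together with a separate argument (Lemma~\ref{lem:small}) showing that when $|S(L_1)|\le4$ and $t(L_1)=2^{k-1}$, any further non-redundant condition drops $t$ to at most $\tfrac{15}{16}2^{k-1}$. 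Your reduction ``$T$ spans $S$'' does not supply this: there are maps with $L_1(e_1)=2$ for which $I(L)$ still spans $\R^k$ (e.g.\ $L(x)=2x_1-x_2-x_3$ on $\R^3$, where $I(L)=\{000,101,110,111\}$), so the $0/1$-spanned hypothesis alone does not force the coefficients into $\{-1,0,1\}$.

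Once the $\{-1,0,1\}$ constraint is in hand, the paper bounds $|S(L_i)|\le9$ for each $i$ (else $t(L_i)\le\tfrac{15}{16}2^{k-1}$ by binomial estimates), and then a computer search over shapes with up to four edges shows that the only surviving shapes for $m\ge4$ are star-like; for these, the largest intersection size strictly below $2^{k-1}$ is at most $2^{k-2}+4<\tfrac{15}{16}2^{k-1}$. This is how the window is emptied, for every $k\ge8$ simultaneously, with no induction on $k$ and no absolute bound on $k$. Your sketch is missing precisely the Littlewood--Offord input and the subsequent shape classification; without them the core case remains open.
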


\section{Framework and notation}
\label{sec:framework}
Melo and Winter showed that, after permuting coordinates as necessary, any linear subspace $S$ of dimension $k$ can be parameterised as $S=\{v\oplus L(v):v\in \R^k\}$ for some linear map $L:\R^k\to \R^{n-k}$, and so $S\cap \H^{n}=\{(v,L(v)):v\in \H^k,~ L(v)\in \H^{n-k}\}$. This led them to consider the equivalent formulation
\[
H(n, k) = \{t \mid \exists L : \R^k\to \R^{n-k} \text{ linear},~ t = |\H^k\cap L^{-1}\H^{n-k}|\}
\]
that we will also use. 
 
Throughout the text, $L:\R^k\to \R^m$ is a linear map and $m=n-k$. Let $e_i\in \R^k$ denote the vector with a 1 in position $i$ and 0s elsewhere, so that $\{e_1, \dots, e_k\}$ is the standard basis for $\mathbb{R}^k$. Any linear map is determined by the values it takes on these basis elements. We denote
\[
I(L)=L^{-1}\left(\H^m\right)\cap \H^k
\]
for the intersection pattern associated with $L$ and $t(L)=|I(L)|$ for its size. The \textbf{support} $S(L)=\{i\in \{1,\dots,k\}:L(e_i)\neq 0\}$ denotes the indices of the basis vectors on which $L$ is non-zero.
\begin{observation}
After permuting coordinates as necessary, there exists a $J(L)\subseteq \H^{|S(L)|}$ such that $I(L)=J(L)\times \H^{k-|S(L)|}$.
\end{observation}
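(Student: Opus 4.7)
The plan is essentially to unpack the definitions and observe that $L(v)$ only depends on the coordinates of $v$ indexed by $S(L)$.

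First, by permuting the coordinates of $\R^k$, I can assume without loss of generality that $S(L) = \{1, 2, \dots, s\}$ where $s = |S(L)|$. This permutation is an automorphism of $\H^k$ that sends $I(L)$ to the intersection pattern of the (coordinate-permuted) linear map, so the conclusion is unaffected.

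Next, for any $v = (v_1, \dots, v_k) \in \R^k$, linearity gives
\[
L(v) = \sum_{i=1}^{k} v_i L(e_i) = \sum_{i=1}^{s} v_i L(e_i),
\]
since $L(e_i) = 0$ for $i > s$ by definition of $S(L)$. In particular, membership of $v$ in $L^{-1}(\H^m)$ depends only on the first $s$ coordinates of $v$. Define
\[
J(L) = \{(v_1, \dots, v_s) \in \H^s : \textstyle\sum_{i=1}^{s} v_i L(e_i) \in \H^m\}.
\]
Then for any $v \in \H^k$, we have $v \in I(L)$ if and only if $(v_1, \dots, v_s) \in J(L)$ (with no constraint on $v_{s+1}, \dots, v_k$), which gives precisely $I(L) = J(L) \times \H^{k-s}$.

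There is no real obstacle here; the statement is purely a bookkeeping observation recording that coordinates outside the support contribute a free factor of $\H^{k-|S(L)|}$ to the intersection pattern. The only substantive content is the harmless coordinate permutation at the start, which is why the statement is phrased as an Observation rather than a Lemma.
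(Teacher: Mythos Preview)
Your proof is correct and is exactly the kind of routine unpacking the paper has in mind; indeed, the paper states this as an Observation with no proof at all, so your argument simply fills in the evident details.
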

For $A\subseteq \{1,\dots,m\}$, define the linear map $L_A:\R^k\to \R^{|A|}$ by $L_A = \pi_A \circ L$, where $\pi_A$ is the projection map onto the coordinates in the set $A$, for example $\pi_{\{1,3\}}(x_1, \dots, x_m) = (x_1, x_3)$. We use the shorthand notation $L_i:=L_{\{i\}}$.
Note that $L(x) \in \mathbb{H}^m$ if and only if $L_i(x) \in \{0,1\}$ for $i = 1, \dots, m$ and that adding conditions can only decrease the intersection size:
\[
I\left(L_{A\cup\{i\}}\right)=\bigcap_{j\in A\cup\{i\}}I(L_j)\subseteq \bigcap_{j\in A}I(L_j)=I(L_{A})
\] for each $A\subseteq [m]$ and $i\in[m]$.
\begin{definition}
\label{def:minimal}
A linear map $L:\R^k\to \R^m$ is called \textbf{minimal} if $S(L_i)\not\subseteq \bigcup_{j\neq i}S(L_j)$ and $t(L_i) < 2^k$ for all $i\in \{1,\dots,m\}$.
\end{definition}

We will see that only specific ``shapes'' of the sets $S(L_i)$ can ``cover'' $[m]$ and still give a large intersection value $t(L)$. 
\begin{definition}
\label{def:shape}
The \textbf{shape} of the linear map $L:\R^k\to\R^m$ is the hypergraph $(V,E)$ with vertex set $V=\{1,\dots,k\}$ and edge set $\{S(L_i):i\in [m]\}$.
\end{definition}
Two specific shapes will be of particular interest. 
\begin{itemize}
    \item A \textbf{(2,1)-star} is a 2-uniform hypergraph $(V,E)$ for which there is a centre $c\in V$ such that $E=\{\{c,v\}:v\in V\setminus \{c\}\}$.
    \item  A \textbf{(3,2)-star} is a 3-uniform hypergraph $(V,E)$ for which there exist distinct $c_1,c_2\in V$ such that $E=\{\{c_1,c_2,v\}:v\in V\setminus \{c_1,c_2\}\}$.
\end{itemize} We will write ``the sets $S(L_i)$ form a (3,2)-star'' to mean that the hypergraph formed by taking the sets $S(L_i)$ as the edge set (i.e. the shape of $L$) is a (3,2)-star.
\begin{figure}
    \centering
    \begin{subfigure}[t]{0.4\textwidth}
    \centering
    \resizebox{\textwidth}{!}{%
    \begin{tikzpicture}
    \node[vertex] (v1) at (0,0) {};
    \node[vertex] (v2) at (2,-1) {};
    \node[vertex] (v3) at (2,1) {};
    \node[vertex] (v4) at (4,0) {};
    \node[vertex] (v5) at (6,0) {};

    \draw[line width=0.05cm] plot [smooth cycle] coordinates {(-0.5,0) (2.35,1.35) (2.35, -1.35)};
    \draw[line width=0.05cm] plot [smooth cycle] coordinates {(4.5,0) (1.65,1.35) (1.65, -1.35)};
    \draw[line width = 0.05cm] plot [smooth cycle] coordinates{(3.5, 0) (4,0.5) (6, 0.5) (6.5, 0) (6, -0.5) (4,-0.5)};
    \end{tikzpicture}
    }
    \end{subfigure}
    \hspace{18mm}
    \begin{subfigure}[t]{0.4\textwidth}
    \centering
    \resizebox{\textwidth}{!}{%
    \begin{tikzpicture}
    \node[vertex] (v1) at (0,0) {};
    \node[vertex] (v2) at (2,-1) {};
    \node[vertex] (v3) at (2,1) {};
    \node[vertex] (v4) at (4,0) {};
    \node[vertex] (v5) at (6,0) {};

    \draw[line width=0.05cm] plot [smooth cycle] coordinates {(-0.5,0) (2.35,1.35) (2.35, -1.35)};
    \draw[line width = 0.05cm] plot [smooth cycle] coordinates{(3.5, 0) (4,0.5) (6, 0.5) (6.5, 0) (6, -0.5) (4,-0.5)};
    \end{tikzpicture}
    }
    \end{subfigure}%

    \caption{An example of a shape of a non-minimal map (left) and of a minimal map (right). Note that on the left-hand side, the support of one of the edges is completely contained in the combined support of the other edges.}
    \label{fig:my_label}
\end{figure}
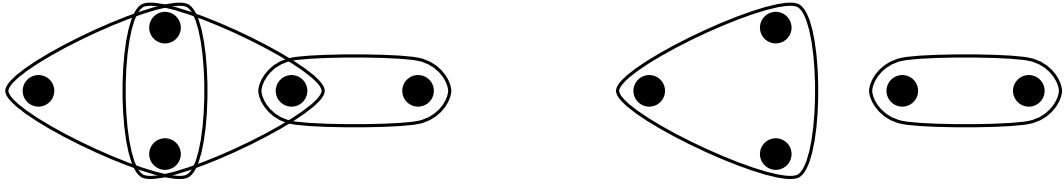
\section{Large intersection sizes}
\label{sec:large}
 {Melo and Winter proved the following proposition, which shows the containment relation $H^+(\infty,k)\supseteq \{2^{k-1}+2^i:i\in \{0,1,\dots,k-1\}\}$.
\begin{proposition}[Proposition 2.7 in \cite{MeloWinter}]
\label{prop:MeloWinter}
Let $k-j \geq t_j>\dots >t_1>t_0\geq 0$. Then $\sum_{i=0}^j 2^{t_i} \in H(\infty, k)$.
\end{proposition}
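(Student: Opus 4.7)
The plan is to give an explicit construction of a linear map $L$ realizing intersection size $\sum_{i=0}^j 2^{t_i}$. First, we note a general monotonicity: whenever $k_0 \leq k$, we have $H(\infty, k_0) \subseteq H(\infty, k)$. Indeed, given a realizing map $L_0 : \mathbb{R}^{k_0} \to \mathbb{R}^{m_0}$, extend it to $L : \mathbb{R}^k \to \mathbb{R}^{m_0 + (k - k_0)}$ by setting $L(e_{k_0 + r}) = 2 e_{m_0 + r}$, which forces $v_{k_0 + r} = 0$ without affecting the intersection size. So it suffices to realize $\sum 2^{t_i}$ in the minimal dimension $k = t_j + j$.

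Assume now $k = t_j + j$. Reserve the first $j$ coordinates as ``centers'' and the remaining $t_j$ as ``leaves''. For each leaf index $s \in \{j+1, \dots, k\}$, let $i^*(s)$ be the smallest $i$ with $t_i \geq k - s + 1$, which is well-defined because $s \geq j+1$ gives $t_j = k - j \geq k - s + 1$. Define $L : \mathbb{R}^k \to \mathbb{R}^{k-j}$ coordinate-wise by
\[ L_s(v) = v_s + \sum_{l = j - i^*(s) + 1}^{j} v_l,\qquad s = j+1, \dots, k. \]
Since each summand lies in $\{0,1\}$, the constraint $L_s(v) \in \{0,1\}$ is equivalent to ``at most one summand equals $1$''.

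I would then verify that $I(L)$ is the disjoint union of $j + 1$ axis-aligned subcubes $C_0, \dots, C_j$ of dimensions $t_0, \dots, t_j$, yielding the desired sum. Concretely, $C_j$ sets every center to $0$ and leaves the last $t_j$ coordinates free; for $i < j$, $C_i$ sets $v_{j-i} = 1$ and every other center to $0$, forces $v_s = 0$ for leaf indices $s \leq k - t_i$, and leaves the rest free. The $C_i$ are pairwise disjoint because of their differing active center (or lack thereof in $C_j$), and $|C_i| = 2^{t_i}$. The inclusion $\bigsqcup C_i \subseteq I(L)$ is routine from the definition of $i^*(s)$. For the reverse inclusion, note that $i^*(s) = j$ holds for $s$ in the nonempty range $[k - t_j + 1, k - t_{j-1}]$, so the corresponding constraint $L_s$ involves all $j$ centers and forces $\sum_{l=1}^j v_l \leq 1$; a case analysis on which (if any) center equals $1$ then pins each $v \in I(L)$ into exactly one $C_i$.

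The main bookkeeping obstacle is checking that $i^*(s)$ correctly captures the freedom pattern of the leaves across the $C_i$: a leaf $v_s$ is free in $C_i$ iff $s > k - t_i$, iff $i \geq i^*(s)$, and this alignment is exactly what guarantees that the linear constraints above carve out $\bigsqcup C_i$. The base case $j = 0$ is automatic since no centers are introduced and each $L_s$ has empty sum, reducing to the trivial map with $|I(L)| = 2^{t_0}$.
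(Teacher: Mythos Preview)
The paper does not supply its own proof of this proposition; it is quoted from Melo and Winter without argument, so there is no in-paper proof to compare against.

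Your construction is correct. The monotonicity reduction to $k = t_j + j$ is valid, and the star-shaped map $L$ with centres $1,\dots,j$ and leaves $j+1,\dots,k$ does carve out exactly $\bigsqcup_{i=0}^j C_i$. The crucial bookkeeping aligns: leaf $s$ is free in $C_i$ iff $s > k - t_i$ iff $i \geq i^*(s)$, while centre $v_{j-i}$ appears in the constraint $L_s$ iff $j - i \geq j - i^*(s) + 1$, i.e.\ iff $i < i^*(s)$. These two equivalences are complementary, so on $C_i$ each $L_s$ evaluates to either $v_s$ (free) or $0+1$ (forced), giving $C_i \subseteq I(L)$. For the reverse inclusion, the strict inequality $t_{j-1} < t_j$ guarantees the range $[k - t_j + 1,\, k - t_{j-1}]$ is nonempty, and any $s$ in it has $i^*(s) = j$, so the corresponding $L_s$ involves all $j$ centres and forces $\sum_{l=1}^j v_l \leq 1$; the case split on the active centre then lands $v$ in the correct $C_i$. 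The base case $j=0$ degenerates to the identity map on $\{0,1\}^{t_0}$ as you note.
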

This reduces Conjecture $A$ to the claim that these are the only values.}
We first investigate the structure of $H^+(k+1,k)$. We will use the observation of Melo and Winter \cite{MeloWinter} that $L(\{e_1,\dots,e_k\})\subseteq \{-1,0,1\}^m$ if $t(L) > 2^{k-1}$. We include their argument here for completeness.

Suppose $\alpha= L_1(e_1)\not\in \{-1,0,1\}$. For any $x\in \{0\}\times \{0,1\}^{k-1}$, if $L_1(x)\in \{0,1\}$, then $L_1(x+e_1)=L_1(x)+\alpha\not\in \{0,1\}$. Hence either $x$ or $x+e_1$ is not in $I(L_1)$, thus $t(L)\leq t(L_1)\leq 2^{k-1}$.
\begin{lemma}
\label{lem:largecodim1}
Suppose $L: \mathbb{R}^k \to \mathbb{R}$ is linear with $L(e_i) \in \{-1, 0, 1\}$ for all $i\in [k]$ and define
\[   a = \left| \left\{ i : L(e_i) = 1 \right\} \right|, \quad b = \left| \left\{ i : L(e_i) = -1 \right\} \right|,  \quad c = \left| \left\{ i : L(e_i) = 0 \right\}\right| .\]
Then
\[ \left| \left\{ x \in \mathbb{H}^k : L(x) = j \right\} \right| = 2^c\binom{a + b}{b + j} .\]
\end{lemma}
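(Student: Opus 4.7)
The plan is to decompose the computation according to the three types of coordinates ($+1$, $-1$, $0$) determined by $L(e_i)$, and then reduce to a standard Vandermonde identity.

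First I would split the set $[k]$ into $A=\{i:L(e_i)=1\}$, $B=\{i:L(e_i)=-1\}$, $C=\{i:L(e_i)=0\}$, with $|A|=a,|B|=b,|C|=c$. For any $x\in\H^k$, write $x=x_A+x_B+x_C$ where $x_S$ is supported on $S$. By linearity, $L(x)=\sum_{i\in A}x_i-\sum_{i\in B}x_i$, which is independent of $x_C$. This already separates off the factor $2^c$ from the free choices on the zero-coordinates, so the count becomes
\[\left|\{x\in\H^k:L(x)=j\}\right|=2^c\cdot\left|\{(y,z)\in\H^a\times\H^b:|y|-|z|=j\}\right|,\]
where $|\cdot|$ denotes the Hamming weight.

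Second I would count the pairs $(y,z)$ by conditioning on $|z|=q$: the number of $z\in\H^b$ with $|z|=q$ is $\binom{b}{q}$, and then we need $y\in\H^a$ with $|y|=q+j$, contributing $\binom{a}{q+j}$. Summing,
\[\left|\{(y,z):|y|-|z|=j\}\right|=\sum_{q\ge 0}\binom{a}{q+j}\binom{b}{q}=\sum_{q\ge 0}\binom{a}{q+j}\binom{b}{b-q}.\]
By the Vandermonde convolution $\sum_{r}\binom{a}{r}\binom{b}{(b+j)-r}=\binom{a+b}{b+j}$ (substituting $r=q+j$), this equals $\binom{a+b}{b+j}$, which completes the proof.

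There is essentially no obstacle here: the argument is a direct combinatorial identity once the partition of coordinates is made. The only mild care needed is to check that the Vandermonde sum is correctly set up (i.e.\ that the convention $\binom{n}{r}=0$ for $r<0$ or $r>n$ makes the summation ranges match), and to note that the formula correctly returns $0$ when $|j|>a+b$ or when $b+j<0$, as it should.
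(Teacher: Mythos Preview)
Your proof is correct and follows essentially the same approach as the paper: decompose into the three coordinate types, separate the factor $2^c$, and reduce the remaining count to a Vandermonde convolution. The paper also records a second, bijective argument, but your Vandermonde computation matches its primary proof.
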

\begin{proof}
Since $L(x)=x_1L(e_1)+\dots+x_k L(e_k)$, we find 
\begin{align*}
    \left| \left\{ x \in \mathbb{H}^k : L(x) = j \right\} \right| &= \sum_{i=0}^a 2^c \binom{a}{i} \binom{b}{i-j} \\
    &= 2^c \sum_{i=0}^{b+j} \binom{a}{i} \binom{b}{b + j -i}\\
    &= 2^c \binom{a+b}{b+j} 
\end{align*}
using the Chu-Vandermonde identity in the last line.
Alternatively this can be shown with less calculation in the following, self-contained argument.
We may renumber the $e_i$ such that $e_1,\dots,e_b$ are mapped to $-1$ and $e_{b+1},\dots,e_{a+b}$ are mapped to $1$. Any element $x\in \{0,1\}^k$ is defined by its support, $\text{supp}(x):=\{i\in [k]:x_i\neq 0\}$. The smallest value $L(x)$ can take is $-b$, which is achieved by $\text{supp}(x)=\{1,\dots,b\}$. If an element $i>b$ is added to $\text{supp}(x)$ or an element $i\in [b]$ is removed from $\text{supp}(x)$, then the value of $L(x)$ will increase by 1. In order to get to $0$, $b$ such changes have to be made which can be done in exactly $\binom{a+b}{b}$ ways. The coordinates mapping to 0 (the $x_i$ for $i \geq a+b+1$) have no effect on the value of $L(x)$ so each can individually be included or not; this gives the factor $2^c$. Similarly, the value $j$ is achieved by making $b+j$ switches which can be done in $\binom{a+b}{b+j}$ ways, giving  $2^c\binom{a+b}{b+j}$ possible $x\in \{0,1\}^k$ with $L(x)=j$.
\end{proof}
\begin{corollary}
\label{cor:codim1sizes}
For $k \geq 6$,
    \[ H^+(k, k +1) = \{ 2^{k-1} + 2^{k-5} + 2^{k-6}, 2^{k-1} + 2^{k-3}, 2^{k-1} + 2^{k-2}, 2^{k}\}. \] 
\end{corollary}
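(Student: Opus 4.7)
The plan is to combine the observation recorded just before Lemma \ref{lem:largecodim1} (that $t(L)>2^{k-1}$ forces $L(e_i)\in\{-1,0,1\}$ for every $i$) with the explicit count of that lemma, reducing the problem to an elementary inequality for binomial coefficients.

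First, I would note that $I(L)=\{x\in\H^k:L(x)\in\{0,1\}\}$ and apply Lemma \ref{lem:largecodim1} together with Pascal's rule to obtain
\[
t(L)=2^c\binom{a+b}{b}+2^c\binom{a+b}{b+1}=2^{k-m}\binom{m+1}{b+1},
\]
where $m=a+b$ is the support size of $L$ and $c=k-m$. Setting $j=b+1\in\{1,\dots,m+1\}$, the condition $t(L)>2^{k-1}$ becomes precisely $\binom{m+1}{j}>2^{m-1}$, and the corresponding intersection size is $2^{k-m}\binom{m+1}{j}$.

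The main step is to enumerate all pairs $(m,j)$ satisfying this inequality. Inspecting rows $m=0,1,\dots,7$ of Pascal's triangle, the solutions are exactly those with $\binom{m+1}{j}\in\{1,2,3,6,10,20,35,70\}$ (e.g.\ $(m,j)=(6,3),(6,4),(7,4)$ all give $35$ or $70$), and multiplying by $2^{k-m}$ collapses these to the four values
\[
2^k,\qquad 3\cdot 2^{k-2}=2^{k-1}+2^{k-2},\qquad 5\cdot 2^{k-3}=2^{k-1}+2^{k-3},\qquad 35\cdot 2^{k-6}=2^{k-1}+2^{k-5}+2^{k-6}.
\]
Each value is achieved by the explicit choice of $L$ with $a$ basis vectors sent to $+1$, $b$ to $-1$, and the remaining $c$ to $0$; the hypothesis $k\geq 6$ ensures that $c=k-m\geq 0$ in every case (in particular for the $m=6$ realisation of $35\cdot 2^{k-6}$).

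Finally, I would rule out $m\geq 8$ by induction on $m$. The base case $m=8$ is a direct check: $\max_j\binom{9}{j}=\binom{9}{4}=126<128=2^7$. For the inductive step, Pascal's rule gives
\[
\binom{m+2}{j}=\binom{m+1}{j-1}+\binom{m+1}{j}\leq 2\max_i\binom{m+1}{i}\leq 2\cdot 2^{m-1}=2^m,
\]
so the inequality continues to fail at level $m+1$. The only mildly subtle point is spotting that the cutoff $m=7$ is sharp and is responsible for the exceptional value $35\cdot 2^{k-6}$ — precisely the anomaly that distinguishes this corollary from Conjecture \ref{conj:large}.
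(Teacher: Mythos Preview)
Your proof is correct and follows essentially the same route as the paper: reduce via Lemma~\ref{lem:largecodim1} and Pascal's rule to the inequality $\binom{m+1}{j}>2^{m-1}$, tabulate the solutions for small $m$, and rule out $m\ge 8$. The only cosmetic difference is that the paper bounds large $m$ via the monotonicity of $2^{-n}\binom{n}{\lfloor n/2\rfloor}$, whereas you use a clean induction through Pascal's identity; both arguments are equivalent and equally short.
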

\begin{proof}
Let $L : \mathbb{R}^k \to \mathbb{R}$ be a linear map. As noted above, we may assume that $L(e_i) \in \{ -1, 0, 1\}$ for all $i\in [k]$. Define $a, b$ and $c$ as in the previous lemma. Then
\[ t(L) = \left|\left\{ x \in \mathbb{H}^k : L(x) \in \{0,1\} \right\} \right| = 2^c \left( \binom{a+b}{b} + \binom{a+b}{b + 1} \right) = 2^c \binom{a+b+1}{b+1} .\]

 {Note that $t(L)=2^c t(L')$ where $L':\mathbb{R}^{k-c}\to \mathbb{R}$ is the projection of $L$ onto the support of $L$. We therefore first look for triples $(a,b,0)$ with $a + b = k$ which give an intersection $t \geq 2^{k-1} + 1$ (and then multiply the result by $2^c$).} We first show that for such triples to exist, we must have $a+b \leq 7$ for which we can check all possible options. The largest coefficient is the central binomial coefficient so
\[    t(L)= \binom{a+b+1}{b+1} \leq \binom{k+1}{\left\lfloor (k+1)/2 \right\rfloor}.\]
The ratio $2^{-n}\binom{n}{\lfloor n/2\rfloor}$ is non-increasing in $n$, since
\[    \frac{\binom{2n}{n}}{2^{2n}} \frac{2^{2n+1}}{\binom{2n+1}{n}}= \frac{2(n+1)}{2n+1} > 1 \text{ and }     \frac{\binom{2n-1}{n-1}}{2^{2n-1}} \frac{2^{2n}}{\binom{2n}{n}}= \frac{2n}{2n}= 1.\]
Since $2^{-(n+1)}\binom{n+1}{\lfloor (n+1)/2\rfloor}\leq \frac14$ for $n=8$, it follows that $t(L)\leq \binom{k+1}{\lfloor (k+1)/2\rfloor}\leq 2^{k-1}$ for all $k=a+b\geq 8$.
Checking all possible $a+b\leq 7$ (by hand or computer) gives the possible values for $t\geq 2^{k-1}$, which are given in the table below. 
\begin{center}
\begin{tabular}{|c|c|c|c|c|c|c|c|c|c|}
\hline
     $a$ & 1 & 2 & 2 & 2 & 3 & 3 & 3 & 4 & 4\\
     $b$ & 1 & 0 & 1 & 2 & 1 & 2 & 3 & 2 & 3\\
     \hline
     $t$ & 3 & 3 & 6 & 10 & 10 & 20 & 35 & 35 & 70\\
     \hline
\end{tabular}
\end{center}
 {Hence, if $t(L)\geq 2^{k-1}+1$, then $t(L)$ is of the form $2^c t'$ for some $t'$, a value in the table above.} 
\end{proof}
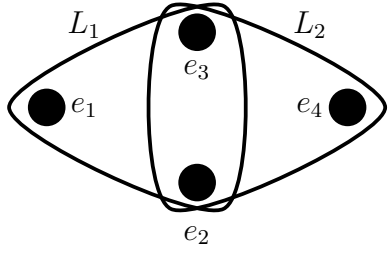
\begin{figure}
    \centering
    \begin{tikzpicture}
    \node[vertex] (v1) at (0,0) {$e_1$};
    \node[]  at (0.5,0) {$e_1$};
    \node[vertex] (v2) at (2,-1) {};
    \node[]  at (2,-1.7) {$e_2$};
    \node[vertex] (v3) at (2,1) {};
    \node[]  at (2,0.5) {$e_3$};
    \node[vertex] (v4) at (4,0) {};
    \node[]  at (3.5,0) {$e_4$};
    \node at (0.5, 1.1) {$L_1$};
    \node at (3.5,1.1) {$L_2$};

    \draw[line width=0.05cm] plot [smooth cycle] coordinates {(-0.5,0) (2.35,1.35) (2.35, -1.35)};
    \draw[line width=0.05cm] plot [smooth cycle] coordinates {(4.5,0) (1.65,1.35) (1.65, -1.35)};
    \end{tikzpicture}
    \caption{A $(3,2)$-star with 2 edges.}
    \label{3-2-star}
\end{figure}
Let $L:\{0,1\}^k\to \R$ be a linear map with $t(L)\geq 2^{k-1}$. We can relabel the coordinates of $\{0,1\}^k$ such that $L(e_i) = 0$ if and only if $i > |S(L)|$. From the above proof of Corollary \ref{cor:codim1sizes} we see that if $|S(L)| = 3$, the non-zero elements $L(e_1), L(e_2)$ and $L(e_3)$ must be $1$, $1$ and $-1$ in some order. This means the elements in $\{0,1\}^3$ that map to $\{0,1\}$ are (up to permuting the coordinates) $\{000, 010, 011, 100, 101, 111\}$.  In particular, if $L(x)  \in \{ 0,1\}$ and $x$ starts $00$, then we know $x$ begins $000$ but if $x$ begins with $01$ then the first three elements may be either $010$ or $011$. We will see below that this is an important part of calculating the intersection size of shape.

Suppose that the shape of $L:\{0,1\}^4\to \R^2$ is a (3,2)-star as in Figure \ref{3-2-star}. To calculate the intersection we condition on the values that the maps $L_1$ and $L_2$ take on the shared basis elements $e_2$ and $e_3$. If they both take the value $1$ on both $e_2$ and $e_3$ and $x = (x_1, x_2, x_3, x_4)$ is such that $L(x) \in \{0,1\}^2$, then given $x_2x_3 = 00$ we know that $x = 0000$. If we are given $x_2x_3 = 01$, then $x_1$ could be 0 or 1 and independently $x_4$ could be 0 or 1 so there are 4 options for $x$. One can continue with this to see that the intersection has size \[ 2 \times 2 + 2 \times 2 + 1 \times 1 + 1 \times 1 = 10.\]

Suppose instead $L_1$ takes the value $1$ on both $e_2$ and $e_3$ but $L_2$ takes the value $-1$ on $e_2$ and 1 on $e_3$ and now attempt to calculate the intersection size. If $x_2x_3 = 00$, we again know that $x_1 = 0$ but now $x_4$ could be 0 or 1 giving 2 options for $x$. If $x_2 x_3 = 01$, then $x_1$ could be 0 or 1 but now $x_4$ must be 0 so we only get 2 options for $x$. Completing the calculation shows that the intersection size is 
\[ 1 \times 2 + 2 \times 1  + 2 \times 1 + 1 \times 2 = 8.\]

Combining the above with a symmetry argument will show that an intersection size strictly greater than half the cube is only possible if the two maps ``agree'' on whether their common intersection has a $-1$, since this gives that the options for the shared coordinates with many extensions, also agree. 

This illustrates the general principle that for large intersection sizes we need the points where each map can extend to a large number of different options to agree to some extent. While this works on all shapes, it quickly becomes very tedious and we will use a computer to calculate the largest intersections of shapes.

Many shapes can be quickly ruled out; for example, if the shape of $L:\{0,1\}^7\to \R^3$ is a ``(3,1)-star with three edges'', say $S(L_1)=\{1,2,3\}$, $S(L_2)=\{1,4,5\}$ and $S(L_3)=\{1,6,7\}$, then counting the number of $x\in I(L)$ with $x_1=0$ and those with $x_1=1$ we find
\[
t(L)=3\times 3 \times 3+3\times 3\times 3 = \left(\frac{3}{4}\right)^32^7<\frac12\times 2^7.
\]
Similar calculations can rule out many other shapes such as having three disjoint edges in your shape.

The main result for large intersections is the following theorem, from which
Theorem \ref{thm:largeinf} follows directly.
\begin{theorem}
\label{thm:largen}
For $k \geq 6$,
\begin{align*}
    H^+(k+1,k) &= \left\{ 2^{k-1} + 2^{k-5} + 2^{k-6},~ 2^{k-1} + 2^{k-3},~ 2^{k-1} + 2^{k-2}, 2^{k} \right\},\\
    H^+(k+2,k) &= H^+(k+1,k) \cup\{ 2^{k-1} + 2^{k-4}\},\\
    H^+(k+i,k) &= H^+(k+i-1,k)\cup \{ 2^{k-1} + 2^{k-(i+1)}\} \text{ for } i = 3, \dots, k-1,\\
   \text{and } H^+(k+i,k) &= H^+(2k-1,k) \text{ for } i \geq k.\\
\end{align*}
\end{theorem}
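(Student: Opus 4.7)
The plan is to prove the four equalities by combining explicit constructions with a structural reduction to a small family of shapes. The case $i = 1$ is Corollary~\ref{cor:codim1sizes}, so what remains is to determine $H^+(k+i,k) \setminus H^+(k+i-1,k)$ for each $i \geq 2$. The central tool is the structural Lemma~\ref{lem:structure}, which (loosely) restricts any minimal $L$ with $t(L) > 2^{k-1}$ so that its shape belongs to a short list that includes $(2,1)$-stars, $(3,2)$-stars, and a small number of sporadic small configurations (e.g.\ two disjoint edges, responsible for $2^{k-1}+2^{k-4}$ at $i=2$, and the configuration yielding $2^{k-1}+2^{k-5}+2^{k-6}$). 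Monotonicity $H^+(n,k) \subseteq H^+(n+1,k)$ comes for free by extending $L$ with a new coordinate that duplicates an existing nonzero row (keeping $t(L)$ fixed while increasing $m$).

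For the lower bound, the primary construction is the $(2,1)$-star $L_j(x) = x_1 - x_{j+1}$ for $j = 1, \dots, i$. Conditioning on $x_1$: when $x_1 = 1$ every $x_j$ is free, contributing $2^{k-1}$; when $x_1 = 0$ we must set $x_2 = \dots = x_{i+1} = 0$, contributing $2^{k-i-1}$. Thus $t(L) = 2^{k-1} + 2^{k-(i+1)}$, which is the new value appearing in the recurrence. For $i = 2$ the value $2^{k-1}+2^{k-4}$ is instead obtained from two disjoint edges $L_1(x) = x_1 - x_2$ and $L_2(x) = x_3 - x_4$, whose conditions are independent of joint measure $(3/4)^2 = 9/16$, giving $9 \cdot 2^{k-4}$. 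Combined with Proposition~\ref{prop:MeloWinter} and monotonicity, this yields the inclusion $\supseteq$ in each equality.

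For the matching upper bound, Lemma~\ref{lem:structure} narrows the search to the allowed shapes, and on each such shape Lemma~\ref{lem:largecodim1} restricts the possible values $L(e_j)$ to finitely many sign patterns. A conditioning argument on the vertices common to many edges shows that a $(2,1)$-star with $m$ edges gives $t(L) > 2^{k-1}$ only when the signs are compatible across edges, forcing $t(L) = 2^{k-1} + 2^{k-(m+1)}$; a parallel analysis shows each $(3,2)$-star either contributes a value already achieved by some $(2,1)$-star or produces $2^{k-1}+2^{k-5}+2^{k-6}$. Stability for $i \geq k$ is then immediate: a $(2,1)$-star requires $m+1$ vertices and a $(3,2)$-star requires $m+2$, so neither can exist once $m \geq k$, and no new values can arise.

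The main obstacle will be the exhaustive case analysis on $(3,2)$-stars and the sporadic configurations --- verifying that they produce only values already on our list (the redundant $2^{k-1}+2^{k-4}$ at $i = 3$ and the exceptional $2^{k-1}+2^{k-5}+2^{k-6}$) --- which is precisely the step where a computer search becomes genuinely useful. A secondary obstacle is correctly identifying the compatibility conditions on signs across edges of a star; this appears manageable via the conditioning computation illustrated after Corollary~\ref{cor:codim1sizes} for the two-edge $(3,2)$-star.
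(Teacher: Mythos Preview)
Your approach diverges from the paper's in a way that leaves a real gap. The paper does \emph{not} argue shape by shape for each fixed $(k,m)$; instead it proves Lemma~\ref{lem:double} (which you never invoke) and runs an induction on $k$. Lemma~\ref{lem:double} says that for $k \geq 8$, any $L$ with $t(L) > 2^{k-1}$ either has $t(L) = 2^{k-1}+1$ with $m \geq k-1$, or admits a coordinate $i$ with exactly half of $I(L)$ lying in $\{x_i = 0\}$. In the second case one deletes that coordinate to get $L' : \mathbb{R}^{k-1} \to \mathbb{R}^m$ with $t(L') = t(L)/2 > 2^{k-2}$, and the induction hypothesis for $k-1$ (base cases $k = 6,7,8$ by computer) identifies $t(L)/2$ and hence $t(L)$. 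Your constructions for the lower bounds are fine and essentially match the paper's; the problem is entirely on the upper-bound side.

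Two concrete issues. First, you misread Lemma~\ref{lem:structure}: it applies only when $|S(L)| = k \geq 8$, and under that hypothesis its conclusion is \emph{only} the two star shapes --- there are no ``sporadic small configurations'' in it. The examples you cite (two disjoint size-$2$ edges; the single edge of size $6$ or $7$, which by Corollary~\ref{cor:codim1sizes} is the actual source of $35\cdot 2^{k-6}$, not a $(3,2)$-star) all live in the regime $|S(L)| \leq 7$, which Lemma~\ref{lem:structure} does not address. Second, and more seriously, Lemma~\ref{lem:structure} classifies the shape of a \emph{minimal restriction} $L_A$, not of $L$ itself. Your upper-bound computation ``a $(2,1)$-star with $m$ edges forces $t(L) = 2^{k-1}+2^{k-(m+1)}$'' is a statement about $t(L_A)$ and $|A|$, not about $t(L)$ and $m$: nothing in your outline controls what happens when the remaining conditions in $[m]\setminus A$ are imposed, nor ties the codimension $m$ of the original map to the exponent. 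That control is exactly what Lemma~\ref{lem:double} supplies (by showing that either $t(L) = 2^{k-1}+1$ outright, or the halving reduction is available), and without it your stability claim for $i \geq k$ and your identification of $H^+(k+i,k)\setminus H^+(k+i-1,k)$ are not justified.
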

In particular, note that $H(k+2,k)=H(k+3,k)$ for $k\geq 4$. We also note that it is easy to check that Conjecture \ref{conj:large} is true for $k \in \{1,\dots,5\}$ using Lemma \ref{lem:largecodim1} and a computer search.

To prove Theorem \ref{thm:largen} we require two more auxiliary lemmas, the first of which describes the structure of $L$.
\begin{lemma}
\label{lem:structure}
Suppose $L:\R^k\to \R^m$ is a minimal linear map with $|S(L_i)|>1$ for all $i\in [m]$, $|S(L)|=k \geq 8$ and $t(L) > 2^{k-1}$. Then the sets $S(L_i)$ form either a (2,1)-star or a (3,2)-star.
\end{lemma}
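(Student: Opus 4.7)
The plan is to combine three ingredients: the size classification from Corollary \ref{cor:codim1sizes} applied to each single map $L_i$, a probabilistic independence argument that converts matching numbers of the shape into multiplicative bounds on $t(L)/2^k$, and the hypothesis $|S(L)|=k\geq 8$ together with minimality to force the shape to be a star. Writing $\rho_i := t(L_i)/2^k$ and $\rho_L := t(L)/2^k$, the basic observation is that whenever $S(L_{i_1}),\ldots,S(L_{i_r})$ are pairwise disjoint, the events $\{L_{i_\ell}(x)\in\H\}$ depend on disjoint coordinate blocks and so are independent for uniform $x\in\H^k$, giving $\rho_L \leq \prod_\ell \rho_{i_\ell}$.

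First I extract the basic bounds. Since $t(L_i)\geq t(L)>2^{k-1}$, Corollary \ref{cor:codim1sizes} forces $|S(L_i)|\in\{2,\ldots,7\}$ with $\rho_i \leq 3/4$, improving to $\rho_i\leq 5/8$ when $|S(L_i)|\geq 4$ and $\rho_i\leq 35/64$ when $|S(L_i)|\geq 6$. From the independence bound, $(3/4)^3 < 1/2$ rules out matchings of size $3$, and $(5/8)(3/4) < 1/2$ forces every matching of size $2$ to consist of edges of size $\leq 3$. I then rule out edges of size $\geq 4$ entirely: if $|S(L_i)|\geq 4$, every other edge must meet $S(L_i)$, and since $|S(L_i)|\leq 7<k$ there is some vertex outside $S(L_i)$ lying in another edge $S(L_j)$, which by minimality is neither contained in nor contains $S(L_i)$. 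For each admissible triple $(|S(L_i)|,|S(L_j)|,|S(L_i)\cap S(L_j)|)$ I condition on the shared coordinates to write
\[
\rho_{ij} \;=\; \frac{1}{2^{s}} \sum_{y\in\H^{s}} f_i(y)\,f_j(y), \qquad s=|S(L_i)\cap S(L_j)|,
\]
where the profiles $f_i,f_j$ are read off from Lemma \ref{lem:largecodim1} applied to the restrictions of $L_i$ and $L_j$ to their unshared coordinates. A finite check over the admissible $(a,b)$-patterns and sign alignments yields $\rho_{ij}\leq 1/2$ in every case, contradicting $\rho_L>1/2$. Hence every edge has size $2$ or $3$.

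With all edges of size $\leq 3$, the same profile-product method shows the matching number is $1$: if instead $e_1,e_2$ were disjoint, then an outside vertex in $[k]\setminus(e_1\cup e_2)$ (existing since $|e_1\cup e_2|\leq 6<k$) lies in a further edge $e_3$ that, up to swapping, is disjoint from $e_2$; conditional independence of $A_2$ from $(A_1,A_3)$ combined with the bound $\rho_{1,3}\leq 5/8$ (the worst case over profile-product computations for two intersecting edges of size $\leq 3$) gives $\rho_L\leq \rho_2\cdot\rho_{1,3}\leq (3/4)(5/8) < 1/2$, a contradiction. Thus the edges pairwise intersect. If they all have size $2$, the shape is a star or a triangle; the triangle has only $3$ vertices in its support, so we are left with a $(2,1)$-star. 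If they all have size $3$, an intersecting $3$-uniform family that is not a $(3,2)$-star either has no common vertex (then three edges on six vertices give $\rho_L<1/2$ by direct enumeration, as in the informal calculations preceding this theorem) or has a common vertex without a common pair (then two edges sharing only that vertex, together with any third edge, are shown by a further profile-product computation to force $\rho_L<1/2$). Finally, mixed-size shapes are excluded by minimality (a size-$2$ edge inside the core of a $(3,2)$-star violates it) and the same style of conditional computation. The principal obstacle is the case analysis in the ``size $\geq 4$'' step, which branches over $|S(L_i)|\in\{4,5,6,7\}$, the intersection size, and the $(a,b)$ patterns; I expect this to simplify using the observation that the profile of a size-$s$ edge on $\ell$ shared coordinates depends only on $(s,\ell,a-b)$ up to coordinate permutation, substantially reducing the number of sub-cases.
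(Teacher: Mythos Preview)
Your overall strategy is the paper's in spirit: both bound $|S(L_i)|\in\{2,\dots,7\}$ via Corollary~\ref{cor:codim1sizes} and then analyse the possible shapes on two and three edges. The paper does this by tabulating the minimal two-edge shapes (finding that all have $|S(L_1)|,|S(L_2)|\le 3$ and $|S(L_{\{1,2\}})|\le 6$) and then running a computer search over three-edge shapes, concluding that only the $(2,1)$- and $(3,2)$-stars survive; every triple being a star of the same type then forces the whole shape to be one. You instead try to carry out the same classification by hand, using the independence bound $\rho_L\le\prod_\ell\rho_{i_\ell}$ for edges with disjoint supports together with the numerical thresholds $3/4$, $5/8$, $35/64$, and then a matching-number argument. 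The elimination of edges of size $\ge 4$ is exactly the content of the paper's two-edge table, and your bound $\rho_{i,j}\le 5/8$ for two intersecting edges of size $\le 3$ is correct.

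There is, however, a genuine gap in your matching-number step. You assert that a third edge $e_3$ through a vertex outside $e_1\cup e_2$ can be taken, ``up to swapping'', disjoint from $e_2$. This is false: with $e_1=\{1,2\}$, $e_2=\{3,4\}$, $e_3=\{2,3,5\}$ (a minimal configuration, all edges of size $\le 3$), the edge $e_3$ meets both $e_1$ and $e_2$, and no relabelling produces the conditional independence of $A_2$ from $(A_1,A_3)$ that your bound $\rho_L\le\rho_2\cdot\rho_{1,3}$ needs. One must instead compute $\rho_{1,2,3}$ directly by conditioning on the shared coordinates $(x_2,x_3)$; the maximum over admissible sign patterns is $14/32<1/2$, so the conclusion survives, but your argument as written does not cover it, and the analogous ``$e_3$ meets both'' sub-cases recur when $|e_1|$ or $|e_2|$ equals $3$. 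Combined with the several shapes in the intersecting-$3$-uniform and mixed-size steps that you defer to ``a further profile-product computation'', the residual case analysis is noticeably larger than your outline acknowledges (and sits in the size-$\le 3$ regime rather than the size-$\ge 4$ step you flag as the principal obstacle), which is presumably why the paper delegated this part to a machine.
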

\begin{proof}
As seen in the proof of Corollary \ref{cor:codim1sizes}, the size of $S(L_i)$ must be in $\{2, \dots, 7\}$ for all $i$ or $t(L) \leq t(L_i) \leq 2^{k-1}$. As we have assumed that $|S(L)| \geq 8$, we must have $m \geq 2$. Checking the possible minimal shapes (as defined in Definition \ref{def:minimal} and \ref{def:shape}) the maps $L_1$ and $L_2$ can form the following options (up to switching $L_1$ and $L_2$).
\[ \begin{tabular}{|c|c|c|}
\hline
    $|S(L_1)|$ & $|S(L_2)|$ & $|S(L_1) \cap S(L_2)|$\\
    \hline
    2 & 2& 0\\
    2 & 2 & 1\\
    2 & 3 & 0\\
    2 & 3 & 1\\
    3 & 3 & 0\\
    3 & 3 & 1\\
    3 & 3 & 2\\
    \hline
\end{tabular}\]
In all of these options $|S(L_{\{1,2\}})| \leq 6$ so in fact $m \geq 3$. This can now be repeated for $L_1, L_2$ and $L_3$ and a computer search shows the sets $S(L_i)$ for $i=1,2,3$ form either a (2,1)-star or a (3,2)-star if $t(L) > 2^{k-1}$.
By the symmetry of $\mathbb{H}^m$, this holds for all distinct $i_1, i_2, i_3$ and the result follows.
\end{proof}
\begin{lemma}
\label{lem:double}
Let $L:\R^k\to \R^m$ be a linear map with $k \geq 8$ and  $t(L)\geq 2^{k-1}+1$. Let $X_i = \{ x \in \mathbb{H}^k : x_i = 0\}$. Then one of the following statements holds:
\begin{itemize}
    \item[(a)] $m \geq k -1$ and $t(L) = 2^{k-1} + 1$;
    \item[(b)] there exists an $i\in[m]$ such that $\left|L^{-1} \left( \mathbb{H}^m \right) \cap X_i \right| = \frac{1}{2}t(L)$.
\end{itemize}  
\end{lemma}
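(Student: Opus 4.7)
The plan is to reduce $L$ to a minimal map $L'$ where Lemma~\ref{lem:structure} applies, and then analyze the two resulting star-shapes separately.

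First I would make three simplifying reductions. (i) If $|S(L)| < k$, pick $i \in [k] \setminus S(L)$; since $L(e_i) = 0$, the involution $x \mapsto x + e_i$ preserves $I(L)$ while swapping $X_i$ with $\H^k \setminus X_i$, so $|I(L) \cap X_i| = t(L)/2$ and (b) holds. (ii) If some $|S(L_i)| = 1$, then the Melo--Winter restriction $L(e_j) \in \{-1,0,1\}$ gives $L_i(x) = \pm x_j$: the $+$ case makes $L_i$ vacuous (drop it without changing $I(L)$), while the $-$ case forces $x_j = 0$ on $I(L)$ and yields $t(L) \leq 2^{k-1}$, a contradiction. So $|S(L_i)| \geq 2$ for all $i$. (iii) If $L$ is still not minimal then, since $t(L_i) < 2^k$ is automatic by Lemma~\ref{lem:largecodim1}, we have $S(L_i) \subseteq \bigcup_{j \neq i} S(L_j)$ for some $i$; dropping this $L_i$ keeps $|S(L)| = k$ (the dropped support was already covered) and only enlarges $I(L)$. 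Iterating yields a minimal $L'$ with $m' \leq m$ outputs, $|S(L')| = k$, $|S(L'_i)| \geq 2$, and $t(L') \geq t(L) \geq 2^{k-1} + 1$, so Lemma~\ref{lem:structure} applies and the shape of $L'$ is a $(2,1)$-star or a $(3,2)$-star.

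For the $(2,1)$-star case ($m' = k-1$), I would write $L'_i(x) = a_i x_c + b_i x_{v_i}$ with $a_i, b_i \in \{-1,1\}$ and condition on $x_c \in \{0,1\}$. A direct count shows that if some leaf has $(a_i, b_i) = (-1,-1)$ then $t(L') \leq 2^{k-2}$, and otherwise $t(L') = 2^p + 2^q$ for nonnegative integers with $p + q = k - 1$; the bound $t(L') > 2^{k-1}$ therefore forces $\{p,q\} = \{k-1, 0\}$ and $t(L') = 2^{k-1} + 1$. Then $t(L) = 2^{k-1} + 1$ and $m \geq m' = k - 1$, so (a) holds. For the $(3,2)$-star case ($m' = k - 2$), I would write $L'_i(x) = \alpha_i x_{c_1} + \beta_i x_{c_2} + \gamma_i x_{v_i}$ and condition on $(x_{c_1}, x_{c_2}) \in \{0,1\}^2$. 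The eight sign triples in $\{-1,1\}^3$ give only six distinct count-vectors $(n_i^{00}, n_i^{01}, n_i^{10}, n_i^{11})$ of valid $x_{v_i}$, and a finite case analysis of $t(L') = \sum_{(y,z)} \prod_i n_i^{yz}$ shows that $t(L') > 2^{k-1}$ forces every leaf to share one of two symmetric count-vectors, $(1,2,2,1)$ or $(2,1,1,2)$. Both options give $t(L') = 2^{k-1} + 2$ with $|I(L') \cap X_{c_1}| = 2^{k-2} + 1 = t(L')/2$, so (b) holds for $L'$ via $i = c_1$.

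Finally I would transfer the conclusion from $L'$ to $L$. In the $(3,2)$-star case, either $t(L) = t(L') = 2^{k-1} + 2$, so $I(L) = I(L')$ and (b) holds via $i = c_1$; or $t(L) = 2^{k-1} + 1 < t(L')$, in which case at least one dropped output genuinely reduced the intersection, so $m \geq m' + 1 = k - 1$ and (a) holds. The main obstacle is the case analysis in the $(3,2)$-star case: one must verify that every non-symmetric count-vector (such as $(1,0,2,1)$ from $(1,-1,-1)$ or $(1,0,0,0)$ from $(-1,-1,-1)$) and every mixture of two distinct count-vectors yields $t(L') \leq 2^{k-1}$, leaving only the two symmetric homogeneous configurations. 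This enumeration is finite and mechanical but splits into several sub-cases depending on which count-vectors appear.
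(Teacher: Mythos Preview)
Your proposal is correct and follows essentially the same route as the paper: reduce to a minimal $L'$ with full support, invoke Lemma~\ref{lem:structure}, and handle the $(2,1)$-star and $(3,2)$-star cases separately, then transfer back to $L$. The paper compresses your star computations into the assertions ``the maximum intersection size of a $(2,1)$-star is $2^{k-1}+1$'' and ``it is easy to see that $t(L')=2^{k-1}+2$ and that (b) holds for $L'$ for every $i$'', whereas you spell out the sign/count-vector analysis explicitly; one small expositional point is that after dropping a vacuous $L_i$ in your step~(ii) you should re-check step~(i), since the drop can shrink $S(L)$, but in that case (b) is immediate.
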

\begin{proof}
If $\left|S(L)\right| < k$, then there is some coordinate $i$ such that $L(e_i) = 0$. This means the value $L(x)$ does not depend on whether $x_i$ is 0 or 1, so that $i$ satisfies (b). Thus we may assume that $|S(L)|=k$. Choose $A\subseteq [m]$ such that $L'=L_A$ is minimal and it satisfies $\left|S(L')\right| = k$. All assumptions of Lemma \ref{lem:structure} are then satisfied and so the sets $S(L'_i)$ form either a $(2,1)$-star or a $(3,2)$-star. 

Suppose the $S(L'_i)$ form a (2,1)-star. Then, as $|S(L')| = k$, $|A| = k -1$ and $m \geq k -1$. The maximum intersection size of a (2,1)-star is $2^{k-1} + 1$ so, since we have assumed $t(L)\geq 2^{k+1}+1$, (a) is satisfied.

Suppose the $S(L'_i)$ form a $(3,2)$-star. It is easy to see that $t(L') = 2^{k-1} + 2$ and that (b) holds for $L'$ for every $i$. Either $L$ and $L'$ have the same intersection pattern (that is, $I(L) = I(L')$) and we are done, or $m \geq |A| + 1 \geq k - 1$ and $t(L)\leq t(L')-1\leq 2^{k-1}+1$ so (a) is satisfied. In either case at least one of (a) or (b) is satisfied for $L$.
\end{proof}
\begin{proof}[Proof of Theorem \ref{thm:largen}]
Using the above lemmas we prove the theorem using induction on $k$. 
A computer search shows that the claim holds for $k = 6,7,8$, so assume the theorem holds for some $k \geq 8$.

For any linear map $L: \mathbb{R}^k \to \mathbb{R}^m$, we can define a map $L' : \mathbb{R}^{k+1} \to \mathbb{R}^m$ by $L'(e_i) = L(e_i)$ for $i = 1, \dots, k$ and $L'(e_{k+1}) = 0$. Then $t(L') = 2 t(L)$, so that $H(k+i,k+1)$ contains the claimed elements for $i\in \{1,\dots,k-1\}$. 
The map $L:\R^{k+1}\to \R^k$ defined by 
\[
L_j(e_i)=\begin{cases}
1 & \text{if }i=j\in \{1,\dots,k\} \text{ or }i=k+1\\
0 & \text{otherwise}
\end{cases}
\]
satisfies $t(L)=2^{k}+ 1$, which implies that $H^+(n,k+1)$ contains the claimed elements for all $n \geq k$. 

It remains to show that if $L: \mathbb{R}^{k+1} \to \mathbb{R}^m$ is a linear map that then $t(L)$ is in the claimed set. Lemma \ref{lem:double} shows that either $t(L) \leq 2^{k}+1$ and $m \geq k$  in which case we are done, or there is an $i$ such that $\left|L^{-1} \left( \mathbb{H}^{m} \right) \cap X_i \right| = \frac{1}{2}t(L)$. In the latter case, the map $L':\R^k \to \mathbb{R}^m$ with
\[
L'(e_j)
 = 
\begin{cases} L(e_j) & \text{for $j = 1, \dots, i-1$}\\
L(e_{j+1}) & \text{for $j=i,\dots,k$}\\ \end{cases}
\]
has $|(L')^{-1}(\H^m) \cap \H^k|=|L^{-1}(\H^m) \cap X_i|=\frac{1}{2}t(L) \geq 2^{k-1} + 1 $. It follows that $\frac{1}{2} t(L) \in H^+\left(n-1,k  \right)$ so $t$ must be of the claimed form by the induction hypothesis.
\end{proof}

\section{Small intersection sizes}
\label{sec:small}
To determine the large intersection sizes we made extensive use of the fact that $L$ must map each of the basis vectors into $\{-1,0,1\}^m$. This is not true in general for small intersection sizes but we show that it is true on the interval $\left( (15/16) \cdot 2^{k-1}, 2^{k-1} \right)$. This allows us to use a structural approach similar to the large case.

To see this suppose $L(\{e_1, \dots, e_k\}) \not \subseteq \{-1,0,1\}^m$. Without loss of generality we can assume that, for some $\ell \geq 1$, $L_1(e_i) \neq 0$ for $i=1, \dots, \ell$, $\alpha = L_1(e_{\ell}) \not \in \{-1,0,1\}$ and that $L_1(e_i)  = 0$ for $i \geq \ell  + 1$. Call an element $y \in \{0,1\}^{\ell-1}$ \textbf{good} if either $L_1(y+e_{\ell}) \in \{0,1\}$ or $L_1(y) \in \{0,1\}$. As $\alpha \not \in \{-1,0,1\}$, it can't be the case that both $L_1(y+e_{\ell})$ and $L_1(y)$ are in $\{0,1\}$ and hence \[ t(L_1) = \# \text{good elements} \cdot 2^{k-\ell}.\]
An element $y \in \{0,1\}^{\ell-1}$ is good if and only if $L_1(y) \in \{0,1, 1- \alpha, - \alpha\}$ so we can bound the maximum intersection size using a Littlewood-Offord style result \cite{ErdosOfford,LittlewoodOfford}. 
\begin{lemma}
\label{lem:antichain}
Let $a_1,\dots,a_\ell \in \R\setminus \{0\}$ and $B\subseteq \R$ of size $|B|=4$. Then 
\[
\left|\left\{I\subseteq [\ell ]:\sum_{i\in I} a_i\in B\right\}\right| \leq \frac{15}{16}2^\ell
\]
for all $\ell \geq 4$.
\end{lemma}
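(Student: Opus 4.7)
My plan is a standard Littlewood--Offord / Sperner argument. First I would reduce to the case that all $a_i>0$. Let $N=\{i:a_i<0\}$ and $c_i=|a_i|$. The symmetric-difference bijection $\phi:2^{[\ell]}\to 2^{[\ell]}$ defined by $\phi(I)=I\triangle N$ satisfies
\[ \sum_{i\in I}a_i \;=\; \sum_{i\in\phi(I)}c_i \;-\; \sum_{i\in N}c_i,\]
so I may replace each $a_i$ by $c_i>0$ and $B$ by $B+\sum_{i\in N}c_i$ (still of size four) without changing the cardinality we are trying to bound.

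With all coefficients positive, strict inclusion $I\subsetneq J$ forces $\sum_{i\in I}a_i<\sum_{i\in J}a_i$, so for each $b\in B$ the level set $\{I\subseteq[\ell]:\sum_{i\in I}a_i=b\}$ is an antichain in the Boolean lattice $2^{[\ell]}$. The family we wish to count is thus a union of at most four antichains, so it contains no chain of length five. By Erd\hack{o}s's generalisation of Sperner's theorem, any such family on $[\ell]$ has size at most $F(\ell)$, the sum of the four largest values of $\binom{\ell}{i}$.

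It therefore suffices to show $F(\ell)\leq\tfrac{15}{16}2^\ell$ for all $\ell\geq 4$. By unimodality the four largest binomials are the four central ones; direct computation gives $F(4)=6+4+4+1=15=\tfrac{15}{16}\cdot 2^4$, so equality holds at the base case. For the inductive step, Pascal's identity yields $F(2m+1)=2F(2m)$ exactly, while $F(2m+2)\leq 2F(2m+1)$ reduces to the elementary inequality $\binom{2m+1}{m-1}\geq\binom{2m+1}{m-2}$. Hence $F(\ell)/2^\ell$ is non-increasing in $\ell\geq 4$, and the bound propagates from the base case.

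The argument is largely routine. The main subtlety is that the inequality is \emph{tight} at $\ell=4$ and $\ell=5$ (both achieved by the constant-coefficient example $a_1=\cdots=a_\ell=1$ with $B$ equal to the four central values of $\sum_ix_i$), so no step can afford to be wasteful: the bijectivity of the sign-flip reduction and the sharpness of Erd\hack{o}s's antichain bound are both essential here.
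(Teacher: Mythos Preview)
Your argument is correct and follows essentially the same route as the paper's proof: reduce to positive coefficients via the symmetric-difference bijection $I\mapsto I\triangle N$, observe that each fibre $\{I:\sum_{i\in I}a_i=b\}$ is an antichain, bound the union of four antichains by the sum of the four central binomial coefficients, and verify that this sum divided by $2^\ell$ is non-increasing with equality at $\ell=4$. The only differences are cosmetic: you name Erd\hack{o}s's generalisation of Sperner explicitly and give a slightly more detailed Pascal-identity induction for the monotonicity, whereas the paper simply asserts the bound and the monotonicity.
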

\begin{proof}
We associate the set $\{I\subseteq [\ell ]:\sum_{i\in I} a_i\in B\}$ with the (disjoint) union of four antichains in $\mathcal{P}([\ell])$.  
The result then follows from the fact that the union of four antichains contains at most $\sum_{i\in \{-1,0,1,2\}}\binom{\ell}{\lfloor \ell/2+i\rfloor }$ elements, which is at most $(15/16)2^\ell$ for $\ell\geq 4$. This can be verified by checking that $2^{-\ell} \sum_{i\in \{-1,0,1,2\}}\binom{\ell}{\lfloor \ell/2+i\rfloor }$ is non-increasing in $\ell$ and then computing $\ell=4$.

For each $b\in B$, we associate a separate antichain. Suppose first that $a_i>0$ for all $i\in [\ell]$. If $\sum_{i\in I_1}a_i=\sum_{i\in I_2}a_i=b$ with $I_1\subseteq I_2$, then $\sum_{i\in I_2\setminus I_1}a_i=0$ and since $a_i>0$, it follows that $I_1=I_2$. Hence the sets $\{I\subseteq[\ell]:\sum_{i\in I}a_i=b\}$ form an antichain of $\mathcal{P}([\ell])$ for each $b\in B$. 

If $N=\{i\in [\ell]:a_i<0\}\neq \emptyset$, then we can consider $b'=b-\sum_{i\in N}a_i$ and $a_i'=|a_i|$ instead. If $\sum_{i\in I}a_i=b$, then \[
\sum_{i\in (I\setminus N)\cup (N\setminus I)} a'_i=\sum_{i\in I\setminus N}a_i-\sum_{i\in N\setminus I}a_i=\sum_{i\in I}a_i-\sum_{i\in N}a_i=b'.
\]
Now note that $I\mapsto (I\setminus N)\cup (N\setminus I)$ defines a bijection on $\mathcal{P}([\ell])$.
\end{proof}

This shows that the intersection size is small if many $e_i$ are mapped to non-zero elements, something we have seen before in Corollary \ref{cor:codim1sizes}. In fact, the above argument can be adapted to show that any map with at least two non-zero entries can have at most $3/4$ of the hypercube mapping to any set of two elements. 

The following result shows if there are a small number of non-zero elements, adding (non-redundant) constraints must reduce the intersection by a large amount which will cause a gap in the values that can be achieved.

\begin{lemma}
\label{lem:small}
Let $L : \mathbb{R}^k \to \mathbb{R}^{m+1}$ be a linear map such that the last condition is not redundant i.e. $t(L) < t(L_{[m]})$. Then 
\[ t(L) \leq \max \left\{ \tfrac{3}{4} t\left(L_{[m]}\right), t\left(L_{[m]}\right) - 2^{k-s-1} \right\} \]
where $s = |S(L_{[m]})|$.
\end{lemma}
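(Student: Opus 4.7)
The plan is to analyse $L_{m+1}$ relative to $S := S(L_{[m]})$, exploiting the product structure $I(L_{[m]}) = J \times \H^{k-s}$ (where $J := J(L_{[m]})$) from the observation in Section~\ref{sec:framework}. Write each $x \in I(L_{[m]})$ as $(y,z)$ with $y \in J$ and $z \in \H^{k-s}$, and split the support $T := S(L_{m+1})$ as $T_1 \cup T_2$, where $T_1 = T \cap S$ and $T_2 = T \setminus S$. Setting $\alpha_i := L_{m+1}(e_i)$, this gives $L_{m+1}(y,z) = c(y) + g(z_{T_2})$ with $c(y) = \sum_{i \in T_1} \alpha_i y_i$ and $g(z_{T_2}) = \sum_{i \in T_2} \alpha_i z_i$, so $t(L) = \sum_{y \in J} N(y) \cdot 2^{k-s-|T_2|}$, where $N(y)$ counts $z_{T_2} \in \H^{|T_2|}$ with $c(y) + g(z_{T_2}) \in \{0,1\}$.

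I would first dispatch two easy cases. If $T_2 = \emptyset$, then $L_{m+1}$ depends only on $y$ and each fiber over $y$ contributes either $0$ or $2^{k-s}$; non-redundancy forces at least one $y$ to contribute $0$, so $t(L) \le t(L_{[m]}) - 2^{k-s} \le t(L_{[m]}) - 2^{k-s-1}$. Otherwise, if some $\alpha_i \notin \{-1,0,1\}$ for $i \in T_2$, the pairing argument from the start of Section~\ref{sec:large} applies: flipping $z_i$ changes $L_{m+1}$ by $\alpha_i$, so at most one element of each such pair lies in $L^{-1}(\H^{m+1})$. This gives $t(L) \le \tfrac12 t(L_{[m]}) \le \tfrac34 t(L_{[m]})$.

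The remaining case is $T_2 \ne \emptyset$ with all $\alpha_i \in \{-1,+1\}$ for $i \in T_2$. Substituting $z_i \mapsto 1-z_i$ whenever $\alpha_i = -1$ (which absorbs a constant into $c$), we may assume $g(z_{T_2}) = \sum_{i \in T_2} z_i$, and then $N(y) = \binom{r}{-c'(y)} + \binom{r}{1-c'(y)} = \binom{r+1}{1-c'(y)}$ by Pascal's identity, where $r := |T_2|$ and $c'(y)$ is the adjusted constant. For $r \ge 2$ I would verify $\max_a \binom{r+1}{a} \le \tfrac34 \cdot 2^r$ directly (equality at $r \in \{2,3\}$, and strict for $r \ge 4$ via monotonicity of the central binomial ratio, as already used in Corollary~\ref{cor:codim1sizes}); summing over $y$ yields $t(L) \le \tfrac34 t(L_{[m]})$. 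For $r = 1$, one has $N(y) = 2$ exactly when $c'(y) = 0$ and $N(y) \le 1$ otherwise, and non-redundancy forces at least one $y$ with $N(y) \le 1$, yielding $t(L) \le t(L_{[m]}) - 2^{k-s-1}$, mirroring the $T_2 = \emptyset$ case.

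The main subtlety is precisely this $r = 1$ subcase: the fiberwise $\tfrac34$ bound fails here because individual fibers can still be full, and one is forced instead to rely on non-redundancy to guarantee a single fiber that loses $2^{k-s-1}$. This is exactly why the statement takes the maximum of the two bounds rather than a single uniform one; the rest of the argument is a clean case split along $T_2$ and the sizes of the coefficients $\alpha_i$ on $T_2$.
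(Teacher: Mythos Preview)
Your argument is correct and follows the same overall decomposition as the paper's: write $I(L_{[m]}) = J \times \H^{k-s}$, restrict $L_{m+1}$ to the last $k-s$ coordinates, and split according to how many of those coordinates carry a non-zero coefficient. The one genuine difference is in the ``at least two'' case. The paper handles $|T_2|\ge 2$ in a single stroke by invoking the two-value Littlewood--Offord bound (the remark preceding the lemma that any linear form with at least two non-zero coefficients hits any two-element set on at most $\tfrac34$ of the cube), valid for arbitrary non-zero $\alpha_i$. You instead split this case further: if some $\alpha_i\notin\{-1,0,1\}$ you use the pairing argument to get $\tfrac12$, and if all $\alpha_i\in\{\pm1\}$ you normalise and use the explicit binomial bound $\binom{r+1}{\lfloor (r+1)/2\rfloor}\le \tfrac34\cdot 2^r$ for $r\ge 2$. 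Your route is slightly longer but more self-contained, since it avoids appealing to the antichain adaptation that the paper only sketches; the paper's route is cleaner once that adaptation is accepted. The $|T_2|\in\{0,1\}$ cases match the paper's treatment essentially verbatim.
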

\begin{proof}
Permuting the coordinates as necessary we can write $I(L_{[m]}) = J(L_{[m]}) \times \{0,1\}^{k-s}$. 
Let $L'$ be the restriction of $L_{m+1}$ to the last $k-s$ coordinates. 
\begin{itemize}
    \item Suppose $L'$ maps at least two basis vectors to non-zero values. Then, as observed above, $L'$ can map at most $3/4$ of the hypercube $\{0,1\}^{k-s}$ to any set of two elements. But for any $(x,z)\in I(L_{[m]})$, we find that $(x,z) \in I(L)$ if and only if $L'(z) \in \{-L(x), 1-L(x)\}$. This implies that $t(L) \leq (3/4)t(L_{[m]})$.
    \item Suppose $L'$ maps one basis vector to a non-zero value. Since the last condition is not redundant, there exist $x \in J(L_{[m]})$ and $z \in \{0,1\}^{k-s}$ such that $L(x,z) \not \in \{0,1\}$. 
Every vector $y \in \{0,1\}^{k-s}$ that agrees with $z$ in position $i$ (that is, $y_i=z_i$) satisfies $L_{m+1}(x,y) = L_{m+1}(x,z) \not \in \{0,1\}$. Since there are $2^{k-s-1}$ such vectors, it follows that $t(L) \leq t(L_{[m]}) - 2^{k-s - 1}$.
    \item The case where $L'$ maps everything to zero is similar to the second case (but with the slightly stronger bound $t(L) \leq t(L_{[m]}) - 2^{k-s}$).
\end{itemize}
In all three cases, the statement holds.
\end{proof}



\begin{lemma}
\label{lem:ints}
If $t(L) > \frac{15}{16} 2^{k-1}$ and $t(L) \neq  2^{k-1}$, then $L(\{ e_1, \dots, e_k\}) \subseteq \{-1,0,1\}^m$
\end{lemma}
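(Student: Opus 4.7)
The plan is to argue by contraposition: assume $L(\{e_1, \dots, e_k\}) \not\subseteq \{-1,0,1\}^m$ and show $t(L) \leq \tfrac{15}{16} 2^{k-1}$ or $t(L) = 2^{k-1}$. The paper's setup just above reduces this to assuming $L_1(e_\ell) = \alpha \notin \{-1,0,1\}$, with $L_1(e_i) \neq 0$ for $i \leq \ell$ and $L_1(e_i) = 0$ for $i > \ell$, and to observing that a vector $y \in \{0,1\}^{\ell-1}$ is good iff $L_1(y) \in B := \{0, 1, -\alpha, 1-\alpha\}$. Since $\alpha \notin \{-1,0,1\}$, the set $B$ has four distinct elements. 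Writing $g$ for the number of good $y$, I have $t(L) \leq t(L_1) = g \cdot 2^{k-\ell}$, so it suffices to bound $g$.

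When $\ell \geq 5$, I would invoke Lemma \ref{lem:antichain} directly on the nonzero scalars $a_i = L_1(e_i)$ for $i = 1, \dots, \ell - 1$ together with the $4$-element set $B$, obtaining $g \leq \tfrac{15}{16} 2^{\ell - 1}$, hence $t(L) \leq \tfrac{15}{16} 2^{k-1}$ and we are done.

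The harder case is $\ell \leq 4$, where Lemma \ref{lem:antichain} is unavailable. Here integrality of $g$ saves us unless $g$ saturates at $2^{\ell - 1}$: if $g \leq 2^{\ell - 1} - 1$, then $t(L) \leq (1 - 2^{-\ell}) 2^{k-1} \leq \tfrac{7}{8} 2^{k-1}$ for $\ell \leq 4$. So assume $t(L_1) = 2^{k-1}$. If $t(L) = 2^{k-1}$ we are in the excluded case; otherwise $t(L) < t(L_1)$, and at least one $j \neq 1$ must satisfy $t(L_{\{1, j\}}) < t(L_1)$ (else each $L_{\{1,j\}}$ has the same intersection pattern as $L_1$, forcing $I(L) = I(L_1)$). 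Applying Lemma \ref{lem:small} to $L_{\{1, j\}}$, with $L_{\{1\}}$ playing the role of $L_{[m]}$ in that lemma and $s = |S(L_1)| = \ell$, yields
\[t(L) \leq t(L_{\{1, j\}}) \leq \max\left\{\tfrac{3}{4} \cdot 2^{k-1},\ 2^{k-1} - 2^{k - \ell - 1}\right\} \leq \tfrac{15}{16} 2^{k-1},\]
since $1 - 2^{-\ell} \leq 1 - 2^{-4} = \tfrac{15}{16}$ for $\ell \leq 4$.

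The main obstacle, and the place where the constant is tight, is precisely the subcase $\ell = 4$ with $t(L_1) = 2^{k-1}$: there Lemma \ref{lem:small} delivers the bound $2^{k-1} - 2^{k-5} = \tfrac{15}{16} 2^{k-1}$ with no slack, which is exactly why the whole section pivots on the threshold $\tfrac{15}{16} 2^{k-1}$ rather than anything smaller.
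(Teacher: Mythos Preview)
Your proof is correct and follows the same approach as the paper: split on $\ell \geq 5$ versus $\ell \leq 4$, invoke Lemma~\ref{lem:antichain} in the former case, and in the latter use integrality of $g$ together with Lemma~\ref{lem:small}. One harmless arithmetic slip: from $g \leq 2^{\ell-1}-1$ the intermediate bound should read $(1 - 2^{1-\ell})2^{k-1}$ rather than $(1 - 2^{-\ell})2^{k-1}$, though your stated conclusion $\leq \tfrac{7}{8}2^{k-1}$ is the correct one.
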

\begin{proof}
Suppose $L(\{e_1, \dots, e_k\}) \not \subseteq \{-1,0,1\}^m$ and as before assume that, for some $\ell \geq 1$, $L_1(e_1), \dots, L_1(e_{\ell-1}) \neq 0$, $\alpha = L_1(e_{\ell}) \not \in \{-1,0,1\}$ and that $L_1(e_{\ell+1}) = L_1(e_{\ell+2}) = \dotsb = L_1(e_k) = 0$.

By Lemma \ref{lem:antichain}, the set of good elements has size at most $(15/16)2^{\ell-1}$ for $\ell-1\geq 4$. Hence, if $\ell\geq 5$,
\[ t(L) \leq t(L_1) \leq \frac{15}{16}2^{\ell-1} 2^{k-\ell} = \frac{15}{16}2^{k-1}. \]
Suppose $\ell \leq 4$, then either $t(L_1) = 2^{k-1}$ or \[t(L) \leq t(L_1) \leq \frac{7}{8}2^{k-1} \leq  \frac{15}{16} 2^{k-1}.\]
If $t(L_1) = 2^{k-1}$, then either $t(L) = t(L_1)= 2^{k-1}$ or, by Lemma \ref{lem:small}, \[t(L) \leq \max \left\{ \frac{3}{4} 2^{k-1}, \left(1 - 2^{-\ell} \right)2^{k-1} \right\} \leq \frac{15}{16} 2^{k-1} . \]
\end{proof}

Armed with the lemmas above we are now ready to prove the main result of this section: for $k \geq 8$ and any $m \geq 1$,
\[ H(k+m, k) \cap \left\{ \tfrac{15}{16} 2^{k-1}, \dots, 2^{k-1}\right\}= \left\{ \tfrac{15}{16} 2^{k-1}, \tfrac{126}{128} 2^{k-1}, 2^{k-1}  \right\}.\]
\begin{proof}[Proof of Theorem \ref{thm:smalln}]
Lemma \ref{lem:largecodim1} shows that the claimed values are present for $m =1$ and hence for all $m \geq 1$. This reduces the problem to showing that no other values are present.

By Lemma \ref{lem:ints}, we may assume that $L$ maps the basis vectors $e_i$ to $\{-1, 0, 1\}^m$ and, without loss of generality, we can also assume that no conditions are redundant i.e. there is no $i \in [m]$ such that $t(L) = t(L_{[m]\setminus \{i\}})$. If $|S(L)| < k$, then the result for $L$ follows from the result for $L_{S(L)}$ the restriction of $L$ to $S(L)$. Hence we only need to prove the result for the maps $L$ such that $|S(L)| = k$.

Having made these assumptions we now proceed as in the large case and look for the shapes which can have a sufficiently large intersection size. We must have $|S(L_i)| \in \{2, \dots, 9\}$ for all $i$ else a condition is redundant or $t(L) \leq t(L_i) \leq (15/16)2^{k-1}$. A computer search shows that there are 18 options for the shape formed by $S(L_1)$ and $S(L_2)$ which have a sufficiently large intersection size but all of them have $|S(L_{1,2})| \leq 6$. As we have assumed $|S(L)| = k \geq 8$, we must have $m \geq 3$ and a similar argument shows that $m \geq 4$. The search shows that either $t(L) \leq t(L_{[4]}) \leq (15/16)2^{k-1}$ or the sets $S(L_1), \dots, S(L_4)$ form one of six different shapes. These shapes are 
\begin{itemize}
     \item the (2,1)-star with 4 edges,
     \item the (3,2)-star with 4 edges,
     \item the (2,1)-star with 3 edges plus a second copy of one of the edges,
     \item the (3,2)-star with 3 edges plus a second copy of one of the edges,
     \item the (3,2)-star with 3 edges where there is an edge containing just the two central vertices,
     \item  and the (3,2) with 2 edges where there are 2 edges containing just the two central vertices.
\end{itemize}

Every subset of 4 edges must form one of the above shapes and so it follows that the only shapes with $m \geq 4$ and sufficiently large intersection size are extensions of the above formed by adding extra edges to the star. By conditioning on the value over the centre of the star, we find that the largest intersection size smaller than $2^{k-1}$ is bounded above by $2^{k-2} + 4$ which, for $k \geq 8$, is less than $(15/16)2^{k-1}$.
\end{proof}
\section{Computer search}
This section gives a brief overview of the computer searches that were instrumental in proving the preceding theorems. 
The aim of each search is to identify the possible shapes $\{S_i:i\in [m]\}$ for which there is a corresponding map with a large intersection size. This is done by starting with the shapes for $m=1$ with sufficiently large intersection (these are easily found by checking binomial coefficients) and incrementally adding conditions (adding new edges $S_{2}, S_{3}, \dots$). 

As adding conditions does not increase the proportion of the hypercube that is covered, only shapes which cover a sufficiently high proportion need to be kept. Isolated points in the hypergraph do not affect the proportion of points covered by a shape so are ignored by the computation. We do a breadth-first search over the possible shapes where, at each step, every way of adding another condition is checked. It is clearly possible to add the conditions in any order and we make use of this by adding conditions in order of non-increasing support size. This reduces the computation required and removes some isomorphic shapes although we make no other attempt to eliminate isomorphic shapes and these must be removed by hand later. In particular, the code for the proof of Theorem \ref{thm:largen} will output ten shapes but only six of them are different.

A shape does not uniquely define a map and different maps with different intersection sizes can have the same shape. For each shape that is considered, the maximum intersection size is computed by checking all assignments of $\pm 1$ to every appropriate $L_i(e_j)$ (where $L_i(e_j)\neq 0$ if and only if the shape dictates that $j\in S(L_i)$). Suppose that the coordinates that are in multiple edges are $1, \dots, \ell$ and the coordinates $\ell+1, \dots, k$ are in exactly one edge. We consider every point $x \in \{0,1\}^\ell$ and, for each edge $L_i$, we calculate $L_i(x)$. Given this value, we can use Lemma \ref{lem:largecodim1} to calculate the number of ways of extending $x$ to the other coordinates in $S(L_i)$. This is done for every map and the total number of ways of extending $x$ to $[k]$ is then the product of the number of ways of extending each map. For a given map, the number of ways of extending $x$ depends only on the number of $1$s (and the number of $-1$s) and not which coordinates they correspond to. This is used to reduce the computation and speed up the search.

Removing any set $S_i$ from a minimal shape leaves another minimal shape (though for a smaller domain size $k$). This means that we will consider all minimal shapes by starting from a minimal shape and attempting to add all possible edges that keep the shape minimal.

For the last proof we cannot restrict to minimal shapes but we instead require that there are no redundant conditions (i.e. there is no $i$ such that $t(L)=t(L_{[m]\setminus \{i\}})$). For simplicity and to keep the search quick, the search does the following which may allow some assignments with redundant conditions. Suppose we are attempting to add a set $S_{m+1}$ to the shape with sets $S_1, \dots, S_m$ which has some previously computed maximum intersection size $t$. When calculating the maximum intersection size of the new shape, any assignments that achieve an intersection of size at least $t$ are ignored as these must have at least one redundant condition. (Note that it is possible that the redundant condition is not the last one.) The shape is kept if the largest intersection size, among those which were not thrown away for being redundant, is sufficiently large. 

The code can be found attached to the arXiv submission.

\section{Conclusion}
While we have shown that Conjecture \ref{conj:small} is false, there is still much that is not known about the small intersections.  {It seems likely that, as the size of the cubes increases to infinity, there are values missing in the lower $2^{-M}$ fraction for any fixed $M$. We in fact make the following stronger conjecture that there is a positive proportion of such values missing.
\begin{conjecture}
For every fixed $M\in \N$, there exist $\epsilon>0$ and $k_0\in \N$ such that $|[0,2^{k-M}]\cap H(\infty,k)|\leq (1-\epsilon)2^{k-M}$ for all $k\geq k_0$.
\end{conjecture}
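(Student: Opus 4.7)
The plan is to extend Theorem \ref{thm:smalln} by showing that, for each fixed $M$, the sliver $(\tfrac{15}{16}2^{k-M},2^{k-M})$ contains only boundedly many elements of $H(\infty,k)$. Concretely, the aim is to produce $N_M\in\N$ and $k_0(M)$ such that
\[
\left|H(\infty,k)\cap(\tfrac{15}{16}2^{k-M},2^{k-M})\right|\leq N_M\qquad\text{for all }k\geq k_0(M).
\]
This sliver has length $\tfrac{1}{16}2^{k-M}$ inside $[0,2^{k-M}]$, so the bound immediately gives $|H(\infty,k)\cap[0,2^{k-M}]|\leq(1-\tfrac{1}{16})2^{k-M}+N_M\leq(1-\epsilon)2^{k-M}$ for every $\epsilon<1/16$ once $k$ is large, proving the conjecture.

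To establish the sliver bound I would follow the three-step blueprint of Theorem \ref{thm:smalln}. First, generalize Lemma \ref{lem:ints}: show that, apart from a bounded list of sporadic values, any $L$ with $t(L)$ in the sliver must satisfy $L(e_i)\in\{-1,0,1\}^m$ for every $i$. Second, granted integer entries, Lemma \ref{lem:largecodim1} together with the estimate $\binom{n+1}{\lfloor(n+1)/2\rfloor}=O(2^{n+1}/\sqrt n)$ shows that each single-constraint support satisfies $|S(L_j)|=O(4^M)$, and iterating Lemma \ref{lem:small} (or mimicking the computer search used for $M=1$) bounds the number of non-redundant constraints in terms of $M$ alone. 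Combining these, the effective support $|S(L)|$ is bounded by some $R_M$ depending only on $M$. Third, writing $t(L)=2^{k-r}s$ with $r=|S(L)|\leq R_M$ and $s$ a positive integer, the sliver condition forces $s\in(\tfrac{15}{16}2^{r-M},2^{r-M})$, so there are at most $\sum_{r\leq R_M}2^{r-M-4}$ sliver values in total, giving $N_M<\infty$.

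The main obstacle is the first step. The Littlewood--Offord bound behind Lemma \ref{lem:antichain} only gives $t(L_1)\leq\tfrac{15}{16}2^{k-1}$ when $L_1(e_i)\notin\{-1,0,1\}$ for some $i$, which is far too weak to exclude the low sliver $(\tfrac{15}{16}2^{k-M},2^{k-M})$ once $M\geq 2$. To close this gap, I would try combining constraints across several coordinates: if non-integer entries appear in many $L_j$, the successive per-coordinate bounds should compound via a joint analog of Lemma \ref{lem:small}, while if they concentrate in a single $L_j$, a refined antichain argument exploiting the arithmetic structure of the target set $\{0,1,1-\alpha,-\alpha\}$ (rather than a generic $4$-element set) might yield the needed sharper bound. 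Should the reduction to integer entries resist, a fallback is to enumerate the admissible non-integer shapes directly: the structural constraints arising from $t(L)\geq\tfrac{15}{16}2^{k-M}$ should still cap $|S(L)|$ by a function of $M$, keeping the admissible shapes---and hence $N_M$---finite in $M$.
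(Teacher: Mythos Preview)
The statement you are attempting to prove is posed in the paper as an open \emph{conjecture} (in the Conclusion); the paper offers no proof, only the remark that ``it seems likely'' to hold. There is therefore nothing in the paper to compare your argument against.

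As for the proposal itself, it is a strategy outline rather than a proof, and you correctly identify the unresolved step. The reduction to $\{-1,0,1\}$-entries (your Step~1) is the whole difficulty: Lemma~\ref{lem:antichain} only yields $t(L_1)\le\tfrac{15}{16}2^{k-1}$, which says nothing once the target window is $(\tfrac{15}{16}2^{k-M},2^{k-M})$ for $M\ge 2$. Your suggested fixes---compounding several coordinate constraints, or sharpening the antichain bound using the specific structure of $\{0,1,1-\alpha,-\alpha\}$---are plausible directions but are not carried out, and either would already be the substantive content of a proof of the conjecture. Step~2 is also thinner than it looks: bounding each $|S(L_j)|$ via the central binomial estimate is fine, but controlling the \emph{number} of non-redundant constraints (and hence $|S(L)|$) from the hypothesis $t(L)>\tfrac{15}{16}2^{k-M}$ does not follow from Lemma~\ref{lem:small} alone, since its $\tfrac34$-branch need not apply and the $2^{k-s-1}$-branch degrades once $s$ is allowed to grow with $M$. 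In short, your plan is a natural extrapolation of the $M=1$ argument, but the paper leaves the conjecture open precisely because the $M\ge 2$ case does not reduce to the existing lemmas.
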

If this conjecture is true, then another interesting question is whether a positive proportion of the values is missing in $[\alpha 2^k,\beta2^k]$ for any fixed $0<\alpha<\beta<1$ and all $k$ sufficiently large (depending on $\alpha$ and $\beta$).}

From our results, one can also draw conclusions about the structure of the sets $H(n)=\bigcup_{k\leq n}H(n,k)$, that is, the possible number of points we can intersect a fixed cube with using linear subspaces. For example, by Corollary \ref{cor:codim1sizes},
\begin{align*}
H(n)\cap \left[\frac14 2^{n},2^n\right]&=
\left(H(n,n)\cup H(n,n-1) \cup H(n,n-2)\right) \cap \left[\frac14 2^{n},2^n\right]\\
&= \left\{2^n,2^{n-2} + 2^{n-6} + 2^{n-7}, 2^{n-2} + 2^{n-4},2^{n-2} + 2^{n-3}, 2^{n-1},2^{n-2}\right\}\\
&= \left\{\frac1{4}2^n, \frac{35}{128}2^n, \frac{5}{16}2^{n}, \frac38 2^{n}, \frac12 2^{n},2^n\right\}.
\end{align*}

\paragraph{Acknowledgements.} We would like to thank James Aaronson for useful discussions and Alex Scott for suggesting the problem.

\bibliographystyle{plain}
\bibliography{intersections}
\end{document}